\newcommand{\maxroot}[1]{\mathrm{maxroot} \left\{ #1\right\}}
\newtheorem{thm}{Theorem}[section]
\newtheorem{cor}[thm]{Corollary}
\newtheorem{lemma}[thm]{Lemma}
\newtheorem{prop}[thm]{Proposition}
\theoremstyle{definition}
\newtheorem{defn}{Definition}
\newtheorem{rmk}{Remark}
\newcommand{\Strans}[1]{\mathbb{S}#1}
\newcommand{\recsum}{\boxplus}
\newcommand{\mysum}[2]{\boxplus_{#1}^{#2}}
\newcommand{\mm} {m}
\newcommand{\nn}{n}
\newcommand{\dd}{d}
\newcommand{\kk}{k}
\newcommand{\AND}{\quad\text{and}\quad}
\newcommand{\R}{\mathbb{R}}
\newcommand{\perms}{\mathcal{P}}
\newcommand{\orth}{\mathcal{O}}
\newcommand{\reps}{\mathcal{A}}
\newcommand{\rframe}[2]{V_{#1}(\mathbb{R}^{#2})}
\newcommand{\test}[1]{\delta_{\left\{ #1 \right\}}}
\newcommand{\one}{\vec{1}}
\newcommand{\charpoly}[1]{\chi \left[ #1 \right]}
\newcommand{\mydet}[1]{\det \left[ #1 \right]}
\newcommand{\twobytwo}[4]{\begin{pmatrix} #1 & #2 \\ #3 & #4 \end{pmatrix}}
\newcommand{\offdiag}[1]{\twobytwo{0}{{#1}}{{#1}^T}{0}}
\newcommand{\diag}[2]{\twobytwo{{#1}}{0}{0}{{#2}}}
\newcommand{\topleft}[1]{\diag{{#1}}{0}}
\newcommand{\expect}[2]{\mathbb{E}_{#1} \left\{ #2 \right\}}
\newcommand{\expectt}[3]{\expect{\substack{#1 \\ #2}}{#3}}
\newcommand{\minor}[3]{\left[ #1 \right]_{#2, #3}}
\newcommand{\arxiv}[1]{\href{https://arxiv.org/abs/#1}{arXiv:#1}}
\title{Existence and polynomial time construction of biregular, 
bipartite Ramanujan graphs of all degrees}
\date{\today}
\author{Aurelien Gribinski\\ 
Princeton University
\and Adam W. Marcus\thanks{Research done under the support of NSF CAREER grant DMS-1552520 and a Von Neumman Fellowship at the Institute of Advanced Study, NSF grant DMS-1128155.}\\
EPFL
}
\begin{document}
\maketitle

\begin{abstract}
We prove that there exist bipartite, biregular Ramanujan graphs of every degree 
and every number of vertices provided that the cardinalities of the two sets of 
the bipartition divide each other.
This generalizes the main result of Marcus, Spielman, and Srivastava 
\cite{MSS4} and, similar to theirs, the proof is 
based on the analysis of expected polynomials.
The primary difference is the use of some new 
machinery involving rectangular convolutions, developed in a companion paper 
\cite{GM1}. 
We also prove the constructibility of such graphs in polynomial 
time in the number of vertices, extending a result of Cohen \cite{MCohen} to 
this 
biregular case.
\end{abstract}

\section{Introduction}

Ramanujan graphs are expander graphs with optimal asymptotic spectral 
properties.
This paper proves two new results concerning Ramanujan graphs. 
Our first result extends the methods of \cite{MSS4} to show the existence of 
{\em unbalanced} bipartite Ramanujan graphs of all sizes and all degrees (under 
the assumption that the sizes 
of the two sets of the bipartition divide each other).
One notable aspect of the result in \cite{MSS4} was the result of \cite{MCohen} 
that showed how one could turn the existence proof in \cite{MSS4} into a 
polynomial construction.
Our second result is an extension of this construction to the biregular 
graphs in the first result.
\subsection{Motivation}

$(c, d)$-biregular bipartite graphs generalize $d$-regular bipartite graphs in 
an obvious way 
--- they allow for graphs which have partitions of different sizes.
This makes them better suited for applications such as deep neural networks and 
LDPC codes.
Similar to the $d$-regular case, however, specific graphs can be better or 
worse suited for a particular applications.
In many situations, the ``suitability'' can be expressed as a function of 
the expansion properties of the graph.
While most expansion properties are NP-complete to compute, there is a 
well-know relation between many of these expansions properties and the 
so-called {\em spectral gap} via Cheeger's inequality.

To be specific, note that the adjacency matrix of a $(c, d)$-biregular 
bipartite graph has the form
\[
B= \offdiag{A}
\]
where $A$ is an $\mm \times \nn$ rectangular matrix with $d$ $1$'s in each 
row and $c$ $1$'s in each column.
Such matrices necessarily have eigenvalues $\pm \sqrt{cd}$ (often called the 
``trivial'' eigenvalues); the spectral gap is then the distance from these 
trivial eigenvalues to the next largest eigenvalue (in absolute value).
Since the eigenvalues of $B$ are symmetric about $0$, maximizing the spectral 
gap can be simplified to minimizing the size of the second eigenvalue.

There is a well-known lower bound on the size of the second eigenvalue of 
biregular, bipartite graphs (at least asymptotically) that is an extension of 
the more well-known Alon--Boppana bound \cite{fengli}:

\begin{thm}[Feng--Li]\label{fengli}
Let $G_1, G_2, \dots$ be a sequence of $(c, d)$ biregular, bipartite graphs and 
suppose there exists a real number $\theta$ for which the second eigenvalue of 
$G_i$ is at most $\theta$ for all $i$.
Then 
\[
\theta \geq \sqrt{c-1} + \sqrt{d-1}.
\]
\end{thm}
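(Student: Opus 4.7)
The plan is to prove this via the trace method, comparing local walk counts in $G_i$ to walks on their common universal cover, the infinite $(c,d)$-biregular tree $T_{c,d}$. Writing $B_i$ for the adjacency matrix and $N_i = \mm_i + \nn_i$, the spectral decomposition together with the hypothesis gives the upper bound
\[
\mathrm{tr}(B_i^{2k}) \leq 2(cd)^k + (N_i-2)\theta^{2k},
\]
since $\pm\sqrt{cd}$ are simple eigenvalues of a connected $(c,d)$-biregular bipartite graph and the remaining eigenvalues have absolute value at most $\theta$ by assumption. Working with even powers exploits the reflection symmetry of the bipartite spectrum.

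Turning to the lower bound, $\mathrm{tr}(B_i^{2k})$ counts closed walks of length $2k$. If a vertex $v \in G_i$ has its $k$-neighborhood isomorphic (as a rooted graph) to that of $T_{c,d}$, the number of length-$2k$ closed walks from $v$ equals the corresponding count at the matching-color root of $T_{c,d}$. Denote these root-counts $W_k^c$ and $W_k^d$ according to the degree at the base point. A standard computation --- either via generating functions and continued fractions on the tree, or directly from the known fact that the $L^2$ spectrum of $T_{c,d}$ is the interval $[-\sigma,\sigma]$ with $\sigma := \sqrt{c-1}+\sqrt{d-1}$ --- yields
\[
\min(W_k^c, W_k^d)^{1/2k} \longrightarrow \sigma \quad \text{as } k \to \infty.
\]

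If a positive fraction of vertices of $G_i$ were tree-like at radius $k$, combining the two bounds, dividing by $N_i$, and sending $i, k \to \infty$ would immediately yield $\theta \geq \sigma$. The main obstacle is that without girth control, most vertices of $G_i$ could lie on short cycles and the naive trace comparison breaks down. The cleanest fix is a Nilli-style test-vector argument: pick two vertices $v, v'$ at graph distance at least $2k+2$ (which exist once $N_i$ is sufficiently large), construct trial vectors supported on their respective $k$-balls whose entries match those of the Perron eigenfunction of the corresponding ball in $T_{c,d}$, and observe that the Rayleigh quotients are governed only by local structure and therefore match the tree's. Combining the two vectors into an orthogonal pair forces orthogonality to the two trivial eigenvectors and exhibits a non-trivial eigenvalue of absolute value at least $\sigma - o_k(1)$. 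Taking $N_i \to \infty$ (passing to the subsequence along which sizes blow up, the only non-vacuous case) and then $k \to \infty$ gives the desired inequality.
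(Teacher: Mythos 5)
The paper does not prove this statement: Theorem~\ref{fengli} is imported verbatim from the Feng--Li reference \cite{fengli} as a known Alon--Boppana-type lower bound, so there is no internal proof to compare yours against. Judged on its own, your outline follows the standard Nilli-style route and the strategy is sound: disjointly supported test vectors modelled on the universal cover $T_{c,d}$, no cross terms in the quadratic form because the supports are at distance at least $2$, and a linear combination orthogonal to the Perron vector whose Rayleigh quotient is at least the minimum of the two individual quotients.

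That said, the step you label ``governed only by local structure and therefore match the tree's'' is the entire analytic content of the theorem and is left unproved. In the biregular case this is not a one-line verification: the optimal radial test function must treat the two vertex classes differently (the natural choice decays like $\left((c-1)(d-1)\right)^{-j/4}$ in the distance $j$ from the root, with class-dependent weights), and one must actually show its Rayleigh quotient is $\sqrt{c-1}+\sqrt{d-1}-O(1/k)$. Three smaller imprecisions: (i) the $L^2$ spectrum of $T_{c,d}$ is \emph{not} the full interval $[-\sigma,\sigma]$ when $c\neq d$ --- it is $\pm\left[\,\lvert\sqrt{c-1}-\sqrt{d-1}\rvert,\ \sqrt{c-1}+\sqrt{d-1}\,\right]$ together with an atom at $0$ --- though only the spectral radius enters your argument, so this is harmless; (ii) with a two-dimensional space of test vectors you can impose orthogonality to only one vector, namely the Perron eigenvector for $+\sqrt{cd}$; that is all the variational characterization of $\lambda_2$ requires, so drop the claim about both trivial eigenvectors; (iii) you are right that the statement is vacuously false for bounded-size sequences (take $G_i=K_{c,d}$, whose second eigenvalue is $0$), so the hypothesis $\lvert V(G_i)\rvert\to\infty$ that your distance-$(2k+2)$ step silently requires is genuinely needed and should be made explicit rather than waved at as ``the only non-vacuous case.''
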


Following \cite{LPS}, we call a sequence of graphs for which $G_i \leq \theta$  
{\em Ramanujan}.
The goal, then, is to find such a sequence.
\begin{rmk}
Note that in the special case $c=d$, a $(c, d)$-biregular graph is simply a 
$d$-regular bipartite graph, and so \eqref{eq:raman} reduces to the well-known 
$2 \sqrt{d-1}$ appearing in the Alon--Boppana bound.
\end{rmk}

\subsection{Previous work}

Ramanujan graphs first appeared in number theory in the work of \cite{Mar} and 
\cite{LPS}, who proved the existence of families of Ramanujan graphs whose 
degree $d = p+1$ where $p$ is prime, and this was later extended in \cite{Mor} 
to the case where $p$ is a prime power. 
More recently, it was proven that for any integer $d$, there are Ramanujan 
bipartite graphs whose degree is $d$ in \cite{MSS1}, but their  specific sizes 
and without construction.
This was extended in \cite{HPS} to a wider collection of sizes but still 
without construction.
Finally, \cite{MSS4} proved the existence of bipartite Ramanujan graphs of any 
degree $d$ and any size $2n$, and this was later turned into a polynomial 
construction in \cite{MCohen}.

For certain applications, however, unbalanced bipartite graphs are often more 
suitable for applications in computer science than balanced ones. 
For example, the recent trend of ``deep'' neural networks favors an 
architecture with multiple layers of varying sizes. 
The majority of the results regarding $d$-regular bipartite graph have been 
extended to the unbalanced case.
The methods of \cite{LPS} and \cite{Mor} were adapted by Ballantine et al. 
\cite{Bal} to give an explicit construction for $(p + 1, p^3 +1)$-biregular 
bipartite graphs when $q$ is a power of a prime number (so in a number and 
group theory perspective). 
The methods of \cite{MSS1} and \cite{HPS} can be used directly to show the 
existence of $(c, d)$-biregular bipartite graphs for arbitrary $c$ and $d$ but 
(again) without construction.
The main contribution of this paper is to complete the picture by extending the 
results of $\cite{MSS4}$ and $\cite{MCohen}$ to the unbalanced case in the case 
when $d$ is an integer multiple of $c$ (or vice versa).

While Ramanujan graphs are, in some sense, the ``optimal'' expanders one could 
hope to provably build, it should be noted that in practice having an {\em 
almost-Ramanujan} graph is often sufficient.
In this respect, life is much easier --- for the $d$-regular bipartite case, 
\cite{fried} showed that, for any fixed $\epsilon$, the probability that a 
random $d$-regular bipartite graph on $n$ vertices has nontrivial eigenvalues 
in the 
interval
\[
[ -2 \sqrt{d-1} - \epsilon, 2 \sqrt{d-1} + \epsilon ]
\]
goes to $1$ as $n \to \infty$.
This was extended to $(c, d)$-biregular bipartite graphs in \cite{Dum}.
However, we do note that although one might expect almost-Ramanujan and 
Ramanujan graphs to have almost-identical properties, there are certain proof 
techniques (related to the Ihara Zeta function) for which Ramanujan graphs are 
distinctly better, for reasons that are beyond the scope of this article (see 
\cite{Bal2}).

\subsection{Summary of the results and outline of the proof}

We will call a biregular, bipartite graph $G = (V, E)$ an $(n, k, d$)-graph if 
the vertices in $G$ can be partitioned into two sets $V = I_1 \cup I_2$ where 
\begin{itemize}
\item $|I_1| = k n$ and $|I_2| = n$
\item $\deg(v) = d$ for all $v \in I_1$ and $\deg(v) = k d$ for all $v \in I_2$
\item all edges in $E$ have one endpoint in $I_1$ and one endpoint in $I_2$
\end{itemize} 
For fixed $k$ and $d$, Theorem~\ref{fengli} implies that the smallest upper 
bound one can hope to get for the second eigenvalue of a sequence of $(n, k, 
d)$-graphs is 
\begin{equation}\label{eq:raman}
\sqrt{d-1} + \sqrt{kd-1}.
\end{equation}
Using the typical convention that, for a Hermitian matrix $B$, $\lambda_k(B)$ 
denotes the $k$th largest eigenvalue (see Section~\ref{sec:notations} for a 
list of notations), our main theorem is:
\begin{thm}\label{thm:main}
For all $\nn, \kk, \dd$, there exists an $(\nn, \kk, \dd)$-graph whose 
adjacency matrix $B$ satisfies 
\[
\lambda_2(B) \leq \sqrt{\dd-1} + \sqrt{\kk\dd-1}.
\]
Furthermore, for fixed $k$ and $d$, we can construct such a graph in time that 
is polynomial in $n$.
\end{thm}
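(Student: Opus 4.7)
The plan is to adapt the interlacing families strategy of \cite{MSS4} to the biregular setting, substituting the rectangular convolution machinery of \cite{GM1} for the symmetric additive convolution used there. I would proceed inductively: starting from an explicit small base $(n_0, \kk, \dd)$-graph, I iteratively perform random $r$-lifts, each of which takes an $(\nn, \kk, \dd)$-graph to an $(r\nn, \kk, \dd)$-graph while preserving the biregularity and the degrees $\dd$ and $\kk \dd$. Each lift is parametrized by a labeling of the edges of the base graph, and the spectrum of the lifted adjacency matrix splits as the spectrum of the base graph together with a set of ``new'' eigenvalues coming from a signed version of the rectangular biadjacency matrix $A$. The task at each stage is to find a labeling for which every new eigenvalue lies in $[-(\sqrt{\dd-1}+\sqrt{\kk\dd-1}),\ \sqrt{\dd-1}+\sqrt{\kk\dd-1}]$.

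To control the new eigenvalues I would compute the expected characteristic polynomial of the signed biadjacency matrix, averaged over a uniformly random labeling. This is where \cite{GM1} enters: that paper identifies such an expectation as a \emph{rectangular convolution} of the characteristic polynomial of the base $A$ with a canonical kernel determined by $\kk$ and $\dd$. This writes the expected polynomial in terms of building blocks whose (rectangular) free-probabilistic behavior is explicitly understood.

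Next I would build an interlacing family over labelings, using the real-rootedness and common-interlacing theorems for rectangular convolutions from \cite{GM1}. The standard argument then produces a specific labeling whose characteristic polynomial has largest root no greater than that of the root-level expected polynomial. To complete the existence proof, one bounds this root by $\sqrt{\dd-1}+\sqrt{\kk\dd-1}$, by an explicit computation on the rectangular convolution (the free-probabilistic analogue of the fact that in \cite{MSS4} the additive free convolution of two semicircles gives a shifted semicircle hitting the Alon--Boppana bound). For the polynomial-time part, I would follow Cohen \cite{MCohen}: walk down the interlacing tree, at each node computing the conditional expected polynomial for each child via the explicit combinatorial formulas for rectangular convolutions, and descending to whichever child has the smaller maximum root. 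The interlacing property ensures this max root never exceeds the Ramanujan bound, and since $\kk$ and $\dd$ are fixed, each step manipulates polynomials of size polynomial in $\nn$.

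The main obstacle is the rectangular analogue of the edge-of-spectrum calculation: showing that the max root of the root-level expected polynomial is exactly $\sqrt{\dd-1}+\sqrt{\kk\dd-1}$. In the square case this is a one-line free-probability computation, but rectangular free convolution is more delicate and requires the theory built in \cite{GM1}. A secondary, more routine obstacle is bounding the bit complexity of the polynomials produced during the descent through the interlacing tree, so that the polynomial-time claim in Theorem~\ref{thm:main} really holds; this can be handled by truncation/rounding arguments in the spirit of \cite{MCohen}, once the rectangular convolution is shown to admit the same kind of stable representation as the symmetric one.
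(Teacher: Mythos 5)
Your high-level architecture (interlacing family, expected characteristic polynomial, rectangular convolution bound, Cohen-style descent) matches the paper's, but the random model you propose does not, and that is where the argument breaks. The paper does not iterate random $r$-lifts of a base graph; it builds the graph in one shot as a union of $\dd$ uniformly random $\kk$-claw matchings, $B=\sum_{i=1}^{\dd}(P_i\oplus S_i)\,M\,(P_i\oplus S_i)^T$ with $P_i\in\perms_{\kk\nn}$, $S_i\in\perms_{\nn}$ independent and $M$ the canonical $\kk$-claw matching. The reason is that the rectangular convolution of \cite{GM1} computes expectations of the form $\expect{Q,R}{\charpoly{(A+QBR^T)^T(A+QBR^T)}}$ over \emph{minor-orthogonal} $Q,R$, and uniform permutations become minor-orthogonal (namely the standard representation of the symmetric group) only after projecting out the common all-ones singular vector --- this is exactly Theorem~\ref{thm:quad}. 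Your central claim, that the expected characteristic polynomial over a uniformly random lift/signing is a rectangular convolution of the base polynomial with ``a canonical kernel determined by $\kk$ and $\dd$,'' is not a result of \cite{GM1} and is not true in that form: for random signings the expected characteristic polynomial is a matching-type polynomial (the \cite{MSS1}/\cite{HPS} route), which is controlled by entirely different tools and admits no convolution identity of the kind you need. So the key analytic step of your existence proof is unsupported.

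Two further problems follow from the same choice. First, iterated lifts of a fixed base graph only produce sizes $\nn$ that are multiples of the base size, so you do not obtain all $\nn$; the union-of-matchings model is defined for every $\nn$ directly. Second, Cohen's polynomial-time descent is tied to the permutation-sum model: conditional expected polynomials at partial assignments are computable precisely because partial permutation averages can be completed to averages over full groups (via the Stiefel-manifold quadrature and the trivariate generating polynomial $\theta_A(x,y,z)$ of Section~\ref{sec:polytime}); no analogous efficient computation of conditional matching polynomials over partial signings is known, which is exactly why \cite{MCohen} works with the \cite{MSS4} model rather than the \cite{MSS1} model. If you replace the lift construction by the union-of-$\kk$-claw-matchings model, the remainder of your outline --- realizability by swaps for the interlacing step, the $\mathcal{Q}$-transform bound on the $\dd$-fold rectangular convolution of $(x-\kk)^{\nn-1}$ yielding $\sqrt{\dd-1}+\sqrt{\kk\dd-1}$, and the greedy descent --- does line up with the paper's proof.
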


As mentioned previously, the approach builds on the work of \cite{MSS4} and 
\cite{MCohen}.
We first prove existence of such graphs in a manner similar to \cite{MSS4}.
Whereas \cite{MSS4} ``built'' Ramanujan graphs as a union of perfect matchings, 
our graphs will be ``built'' as a union of ``$\kk$-claw  matchings'' (see 
Figure~\ref{fig:k-claw}). 

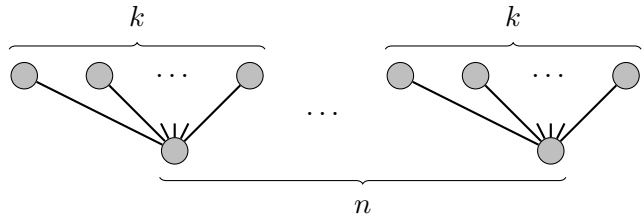
\begin{figure}[h]

\usetikzlibrary{positioning,chains,fit,shapes,calc}
\usetikzlibrary{decorations.pathreplacing}
\tikzstyle{vertex}=[draw,circle,fill=black!25,minimum size=10pt,inner sep=1pt]
\tikzstyle{edge}= [draw,thick,-]

\begin{center}
\begin{tikzpicture}[scale=1]

\def\x{0}
\def\y{0}
\def\val{1}
 
\node[vertex] (v\val) at (\x, \y) {};
\node[vertex] (u1\val) at (\x-2, \y+1) {};
\node[vertex] (u2\val) at (\x-1, \y+1) {};
\node  at (\x, \y+1) {$\dots$};
\node[vertex] (u3\val) at (\x+1, \y+1) {};

\node (fake1\val) at (\x - 0.25, \y + 0.5) {};
\node (fake2\val) at (\x, \y + 0.5) {};
\node (fake3\val) at (\x + 0.25, \y + 0.5) {};

\path[edge] (v\val) edge (u1\val);
\path[edge] (v\val) edge (u2\val);
\path[edge] (v\val) edge (u3\val);

\path[edge] (v\val) edge (fake1\val);
\path[edge] (v\val) edge (fake2\val);
\path[edge] (v\val) edge (fake3\val);

\draw [decorate, decoration={brace, raise=10pt}] (\x - 2.2, \y + 1) -- 
node[above=15pt] {$\kk$} (\x+1.2, \y + 1);

\def\x{5}
\def\y{0}
\def\val{2}
 
\node[vertex] (v\val) at (\x, \y) {};
\node[vertex] (u1\val) at (\x-2, \y+1) {};
\node[vertex] (u2\val) at (\x-1, \y+1) {};
\node at (\x, \y+1) {$\dots$};
\node[vertex] (u3\val) at (\x+1, \y+1) {};

\node (fake1\val) at (\x - 0.25, \y + 0.5) {};
\node (fake2\val) at (\x, \y + 0.5) {};
\node (fake3\val) at (\x + 0.25, \y + 0.5) {};

\path[edge] (v\val) edge (u1\val);
\path[edge] (v\val) edge (u2\val);
\path[edge] (v\val) edge (u3\val);

\path[edge] (v\val) edge (fake1\val);
\path[edge] (v\val) edge (fake2\val);
\path[edge] (v\val) edge (fake3\val);

\draw [decorate, decoration={brace, raise=10pt}] (\x - 2.2, \y + 1) -- 
node[above=15pt] {$\kk$} (\x+1.2, \y + 1);

\node at (2, 0.5) {$\dots$};

\draw [decorate, decoration={brace, raise=10pt, mirror}] (-0.2, 0) -- 
node[below=15pt] {$n$} (5.2, 0);

\end{tikzpicture}
\end{center}

\caption{A k-claw matching on $n \times kn$ vertices}\label{fig:k-claw}
\end{figure}
For convenience,  denote a ``canonical'' $\kk$-claw matching: the one 
with adjacency matrix
\[
M = \offdiag{I_{\nn}^{[\kk]}}
\quad \text{where} \quad 
I_{\nn}^{[\kk]} = 
\begin{pmatrix} I_{\nn}\\I_{\nn}\\\vdots\\I_{\nn} \end{pmatrix} \bigg\} 
\text{~$\kk$ copies}
\]
An arbitrary $\kk$-claw matching can be obtained from the canonical one by 
permuting the vertices.
This corresponds to multiplying $I_n^{[k]}$ on the left by some permutation 
matrix $P \in 
\perms_{kn}$ and on the right by some $S \in \perms_n$ (where $\perms_k$ 
denotes 
the collection of $k \times k$ permutation matrices).
Hence the union of $d$ $\kk$-claw matchings has an adjacency matrix of the form
\[
B 
= \sum_{i=1}^{\dd} (P_i \oplus S_i) M  (P_i \oplus S_i )^T
= 
\sum_{i=1}^{\dd} \offdiag{(P_i I_{\nn}^{[\kk]}S_i)}.
\]
Our approach will be to draw $P_i$ and $S_i$ uniformly and independently from 
$\perms_{kn}$ and $\perms_n$ (respectively), and to show that
\[
 \mathbb{P} \left( \lambda_2(B)  \leq \sqrt{\dd-1} + \sqrt{\kk\dd-1}  \right) > 
 0
\]
The proof will use the ``method of interlacing polynomials'' first introduced 
in \cite{MSS1} to link this event to the expected characteristic polynomial.
In particular, we prove the following proposition in Section~\ref{convo}:
\begin{prop} [Expected eigenvalue bound]  \label{Ebound}
The polynomial
\begin{equation}\label{eq:exp_polynomial}
\expectt{P_1, \dots,P_{\dd}}{S_1,\dots,S_{\dd}}{\charpoly{B}}
\end{equation}
is real rooted and
\[
  \lambda_2(B) \leq \mu_2 \left( \expectt{P_1, 
  \dots,P_{\dd}}{S_1,\dots,S_{\dd}}{\charpoly{B}} \right)
\]
with nonzero probability (where $\mu_2$ denotes the ``second largest root'' 
function).
\end{prop}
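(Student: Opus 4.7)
The plan is to apply the method of interlacing polynomials of \cite{MSS1}, which requires three ingredients: a trivial-eigenvalue analysis so that the ``second largest root'' bound is meaningful, the construction of an interlacing family, and a real rootedness proof for the fully averaged polynomial.

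First, I would observe that for any realization of the pairs $(P_i, S_i)$, the matrix $B$ has $\pm \dd\sqrt{\kk}$ as its extremal eigenvalues. The singular values of $I_{\nn}^{[\kk]}$ are all equal to $\sqrt{\kk}$, since $(I_{\nn}^{[\kk]})^T I_{\nn}^{[\kk]} = \kk I_{\nn}$, so each summand $\offdiag{(P_i I_{\nn}^{[\kk]} S_i)}$ has spectral radius exactly $\sqrt{\kk}$. Moreover, the appropriately weighted bipartite Perron vector is a common eigenvector of all $\dd$ matchings at eigenvalue $\sqrt{\kk}$, so the sum forces $\dd\sqrt{\kk}$ to be attained; bipartite symmetry supplies $-\dd\sqrt{\kk}$ as well. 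Consequently, $\charpoly{B}$ factors as $(x^2 - \kk\dd^2)\, q(x)$ for a polynomial $q$ whose largest root equals $\lambda_2(B)$, so the ``second largest'' function captures exactly what we want to control.

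Next, I would build an interlacing tree whose node at depth $i$ encodes a choice of the first $i$ pairs $(P_j, S_j)$ and carries as its polynomial the conditional expectation of $\charpoly{B}$ given those fixed choices. By the standard framework of \cite{MSS1, MSS4}, establishing that this is an interlacing family reduces to showing that at every internal node, all convex combinations of the children's polynomials are real rooted; because such a convex combination is itself an expectation of $\charpoly{B}$ under a (possibly non-uniform) product measure on $\perms_{\kk\nn} \times \perms_{\nn}$ for the pending slot combined with uniform averages further down, the requirement collapses to the uniform statement that every conditional expectation in the tree---including the root, which is precisely \eqref{eq:exp_polynomial}---is real rooted.

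The main obstacle, and the step that consumes the new machinery of the companion paper \cite{GM1}, is establishing exactly this real rootedness. The relevant operation is the uniform average of $\charpoly{A + (P\oplus S)\, M\, (P\oplus S)^T}$ over $P \in \perms_{\kk\nn}$ and $S \in \perms_{\nn}$, where $A$ is the Hermitian matrix carrying the already-fixed summands. After factoring out the trivial eigenvalue pair $\pm\sqrt{\kk}\,\|A/\sqrt{\kk}\|$ carried along by the Perron direction, this averaging operation coincides with the rectangular additive convolution introduced in \cite{GM1} applied to the characteristic polynomials of the bipartite off-diagonal blocks. Invoking the theorem of \cite{GM1} that rectangular convolution preserves real rootedness in the relevant class, I can iterate through the $\dd$ levels of the tree, simultaneously yielding the interlacing family property and the real rootedness of \eqref{eq:exp_polynomial}. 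The method of interlacing polynomials then produces a specific realization with $\lambda_2(B) \leq \mu_2\!\left(\expectt{P_1,\ldots,P_{\dd}}{S_1,\ldots,S_{\dd}}{\charpoly{B}}\right)$ occurring with positive probability.
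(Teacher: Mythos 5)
There is a genuine gap at the interlacing-family step. To conclude that some leaf satisfies $\lambda_2(B) \leq \mu_2$ of the root polynomial, you need every internal node of your tree to have children whose polynomials admit a common interlacing, which (by the standard equivalence from \cite{MSS1}) means that \emph{every} convex combination of the children's polynomials must be real rooted --- not just the uniform one. You assert that such a convex combination is an expectation under a ``possibly non-uniform product measure'' and that the requirement therefore ``collapses to the uniform statement.'' That reduction is false: the quadrature/convolution identities you invoke from \cite{GM1} (Theorem~\ref{thm:convo} and Theorem~\ref{thm:quad} in this paper) hold only for the uniform (Haar) average over a minor-orthogonal family --- indeed the paper explicitly notes that the identity already fails for the uniform measure on $\perms_n$ itself, let alone for non-uniform measures. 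Real rootedness of the uniform average over a group does not imply real rootedness of averages under skewed measures on that group, so your coarse tree (one level per full choice of $(P_i,S_i)$) is not shown to be an interlacing family, and the ``with nonzero probability'' conclusion does not follow.

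The paper closes exactly this gap by a different device: it refines the tree down to individual \emph{random swaps} and invokes the machinery of \cite{MSS4}. Theorem~\ref{thm:mss4} states that for independent random permutations $M_i$ that are \emph{realizable by swaps}, the expected characteristic polynomial of $\sum_i M_i A_i M_i^T$ is real rooted and the $\lambda_2$ bound holds with nonzero probability; the non-uniform convex combinations that arise along the swap tree are handled there via real stability of determinantal polynomials under rank-two updates, which is the genuinely hard input. Combined with \cite{MSS4}[Lemma~3.5], which shows that a uniformly random $P \oplus S$ with $P \in \perms_{\kk\nn}$, $S \in \perms_{\nn}$ independent is realizable by swaps, Proposition~\ref{Ebound} follows immediately. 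The rectangular convolution of \cite{GM1} plays no role in this proposition; it enters only afterwards (Section~\ref{bound}) to \emph{bound the roots} of the expected polynomial. Your trivial-eigenvalue analysis in the first paragraph is correct but is likewise not where the difficulty lies.
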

It is therefore sufficient to prove that
\[
\mu_2\left( \expectt{P_1, \dots,P_{\dd}}{S_1,\dots,S_{\dd}}{\charpoly{B}} 
\right) \leq \sqrt{\dd-1} + \sqrt{\kk\dd-1}
\]
which we show in Section~\ref{bound} using a ``rectangular'' polynomial 
convolution introduced in \cite{GM1}.

We then show how the method used in \cite{MCohen} to make \cite{MSS4} 
constructive can be extended to our situation, resulting in a polynomial 
construction.
Specifically, 
\begin{thm}
For fixed $\kk$ and $\dd$, there exists an algorithm that constructs a 
Ramanujan $(\nn,\kk,\dd)$-graph in time $O(\nn^C)$ where $C = C(k, d)$ is an 
explicit function of $\kk$ and $\dd$.
\end{thm}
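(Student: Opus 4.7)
The plan is to adapt Cohen's derandomization framework \cite{MCohen} to the $(\nn,\kk,\dd)$ setting. The existence argument behind Theorem~\ref{thm:main} yields an interlacing family whose leaves are indexed by tuples $(P_1, S_1, \dots, P_\dd, S_\dd) \in (\perms_{\kk\nn} \times \perms_\nn)^\dd$, whose root polynomial is the fully averaged one of Proposition~\ref{Ebound}, and whose second root at the root is bounded by $\sqrt{\dd-1} + \sqrt{\kk\dd-1}$. The interlacing property already certifies the existence of a leaf meeting the bound; the algorithmic task is merely to find such a leaf in $\mathrm{poly}(\nn)$ time. The naive greedy descent fails because each internal node has $(\kk\nn)! \cdot \nn!$ children, so one must refine the tree into one with polynomially bounded branching at each level.

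Following Cohen, I would decompose the uniform distribution on $\perms_\nn$ (and independently on $\perms_{\kk\nn}$) into a product of $O(\nn)$ moves of polynomial support, the simplest choice being uniform transpositions. This replaces each leaf-to-root path by $\dd \cdot O(\nn)$ individual transposition choices, each with only $O(\nn^2)$ options. The essential algebraic step is to check that this refined family remains an interlacing family: at every level, the conditional expected polynomial over the moves not yet fixed must be real-rooted, and must interlace the polynomials arising from each of the possible next moves. This reduces to verifying that the rectangular convolution of \cite{GM1} interacts well with partial averaging over a single transposition, which should follow from the same compatibility between the convolution and convex combinations used in the existence proof.

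Once the refined family is in place, the algorithm iterates through the $O(\dd\nn)$ levels; at each level it evaluates the $O(\nn^2)$ candidate transpositions, computes the resulting conditional expected polynomial (a rectangular convolution of polynomials of degree $O(\kk\nn)$, with one factor replaced by its single-transposition average), extracts $\mu_2$ of that polynomial, and keeps the candidate whose $\mu_2$ is no larger than that of the current node. The interlacing property guarantees that at least one such candidate exists. Both the convolution and the root extraction are polynomial-time operations to the precision required, yielding total running time $O(\nn^C)$ with $C = C(\kk,\dd)$ absorbing the dependence on the polynomial degrees and the precision needed to resolve $\mu_2$.

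The main obstacle I expect is establishing the refined interlacing property for transposition-level moves under the rectangular convolution. In the $\kk=1$ case, the square symmetry allowed Cohen to appeal directly to the closure properties of the additive convolution; here the row and column dimensions enter asymmetrically through the rectangular convolution, and one must verify that conditioning on a single swap on either the $P$-side or the $S$-side preserves real-rootedness and produces polynomials that interlace the unaveraged ones. A secondary technical point is quantifying the numerical precision needed to correctly compare $\mu_2$ values between candidates, since the separation between the candidate polynomials could be exponentially small; this should be controllable by using the explicit form of the rectangular convolution to give polynomial lower bounds on the relevant root gaps, as in \cite{MCohen}.
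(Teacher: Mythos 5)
There is a genuine gap, and it comes from misidentifying where the difficulty lies. The interlacing property of the refined tree is not the hard part: realizability by swaps (Theorem~\ref{thm:mss4} and \cite{MSS4}) already gives real-rootedness and common interlacing for every conditional average in the partial-assignment tree, and this is inherited directly from the existence proof. The hard part --- and the entire content of the paper's argument --- is \emph{computing} the conditional expected characteristic polynomial at an internal node in polynomial time. Your proposal asserts that this conditional polynomial is ``a rectangular convolution with one factor replaced by its single-transposition average,'' but that is not what the object is. If you decompose the uniform measure on $\perms_\nn$ into uniformly random transpositions, the conditional law of the suffix given a fixed prefix is not uniform on any symmetric group (nor even supported on one), so none of the quadrature/convolution formulas apply. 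If instead you use a Fisher--Yates-style decomposition, then fixing a prefix of swaps is exactly fixing a partial assignment of edges, your tree coincides with the paper's tree, and the node polynomial is precisely the quantity \eqref{eq:alg_goal}: an expectation over $\perms_{\kk\ell}\times\perms_{\ell}$ acting only on the unassigned block $C_\ell$, with the already-assigned edges frozen inside $A$. No convolution identity computes this directly, because the group acts on a proper sub-block rather than on the whole matrix.

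What the paper actually supplies, and what is missing from your proposal, is the machinery to evaluate \eqref{eq:alg_goal}: (i) projecting out the all-ones vector and eliminating the redundant second permutation by diagonalizing $C_\ell^-$ via its SVD, reducing the average to one over a random element of the Stiefel manifold $\rframe{\ell-1}{\kk\ell-1}$ (this step is new to the rectangular case $\kk>1$ and is where the asymmetry you worry about is genuinely handled, via minor-orthogonality of Stiefel frames, Lemma~\ref{frameLA}); (ii) an explicit minor-expansion formula for the coefficients of the resulting expected polynomial in terms of the quantities $A^j_{p,q}$ (Proposition~\ref{prop:decompose}); and (iii) the observation that the exponentially many terms in each $A^j_{p,q}$ are the coefficients of the trivariate determinant $\theta_A(x,y,z)=\mydet{xI+\tilde A(y,z)^T\tilde A(y,z)}$, which is computable by a single polynomial-sized determinant evaluation (Lemma~\ref{lem:same}). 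Without some substitute for steps (i)--(iii), your greedy descent has no way to produce the polynomials it needs to compare, so the algorithm as proposed does not run. The precision issue you raise at the end is, by contrast, minor: the candidate polynomials have explicitly bounded rational coefficients and fixed degree, so comparing second-largest roots is routine.
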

\begin{rmk}
We stress that this really is a ``polynomial time algorithm'' in name only --- 
the constant 
$C$ can be (and actually is) quite large.
So in practice, this can only be implemented in the case that both $\kk$ and 
$\dd$ are (quite) small.
\end{rmk}

The general idea of the algorithm is to use the interlacing family tree 
constructed in the existence proof. 
If we consider the root node to correspond to the empty graph, then the 
children of any node in the tree correspond to the graph where a single edge 
has been added.
Every $k$ levels in the tree consists of adding a new $\kk$-claw and every 
$\nn\kk$ levels corresponds to adding a new $\kk$-claw matching.
The leaves (at depth $\dd\kk\nn$) will correspond to all biregular, bipartite 
graphs that have adjacency matrix
\[
\sum_{i=1}^{\dd} (P_i \oplus S_i) M  (P_i \oplus S_i )^T.
\]

Now we can assign polynomials to each node --- the leaves will have the 
characteristic polynomial associated to biregular, bipartite graph assigned to 
it: 
\[
\charpoly{\sum_{i=1}^{\dd} (P_i \oplus S_i) M  (P_i \oplus S_i )^T}
\]
and the non-leaf nodes will be an average of all of the leaf 
nodes descended from it (in particular, the polynomial at the root will get 
the polynomial \eqref{eq:exp_polynomial} above).

The fact that this forms an interlacing family implies two things:
\begin{itemize}
\item Every polynomial associated to a node has only real roots
\item For every non-leaf node polynomial $p$, one of its children has a 
polynomial $q$ with $\mu_2(q) \leq \mu_2(p)$ ($\mu_2$ denoting the second 
largest root).
\end{itemize}
The algorithm goes down the tree $k$ levels at a time.
At each step, it computes all of the characteristic polynomials associated to 
the children $k$ generations down, picks the one that has the best root, and 
continues in this manner.
After $\nn \dd$ steps, we will return the leaf node that is found.
There are two things that need to be checked:
\begin{itemize}
\item At each level the number of characteristic polynomials we need to check 
is polynomial in $n$
\item We can compute these characteristic polynomials efficiently.
\end{itemize}
The first item is straightforward, and the second is straightforward when we 
are at levels that are a multiple of $\kk \nn$ (so unions of complete $k$-claw 
matchings).
At these levels, the expected characteristic polynomials are summed over entire 
copies of the symmetric group, and so this can be done easily using convolution 
formulas.
So it remains to show that this can be done at intermediate steps, and this 
turns out to be far more nontrivial (and is the major contribution of 
\cite{MCohen}).

Assume we are at level for which $\ell$ different $\kk$-claws still need to be 
found to complete the current matching. 
Let $A$ be the adjacency matrix at the most recent level that was a multiple of 
$\kk \nn$ (possible the zero matrix).
In order to take the next step, we will see that it suffices to compute the 
following polynomials in efficiently:
\[
\expectt
{P \in \perms_{\kk l}}
{S \in \perms_{l} }
{
\charpoly{
    \offdiag{A}
    +
    \diag{P \oplus I}{S \oplus I}
    \offdiag{C_{\ell} }
    \diag{P \oplus I}{S \oplus I}^T
}}
\] 
where $C_\ell$ consists of $k$ copies of the identity matrices $I_\ell$ stacked 
up (and the rest $0$): 
The strategy is to bring ourselves back to averaging over entire groups so that 
we can use symmetry and cancellations. 
The idea in \cite{MCohen} is that this can be accomplished in a 
generating-function manner using a cleverly chosen trivariate 
characteristic polynomial:
\[
\mydet{ x I + \diag{y I_s}{I_{\nn\kk-s}} B \diag{z I_r}{I_{\nn-r}} }.
\]
The $y$ and $z$ variables effectively distinguish which parts of $B$ contribute 
to certain coefficients in the characteristic polynomial, so this allows us to 
replace $C_\ell$ with the much nicer $M$ and then use $y$ and $z$ to restrict 
back to the upper left block.

\subsection{Notations} \label{sec:notations}

Throughout the paper, we will assume that we have fixed $\nn$ and $\kk$ and 
will use the following notational 
conventions (some 
of which are may or may not be ``conventional''):
\begin{itemize}
\item For an $n \times n$ Hermitian matrix $M$, we will write its (real) 
eigenvalues as 
\[
\lambda_1(M) \geq \lambda_2(M) \geq \dots \geq \lambda_n(M)
\]
and its characteristic polynomial as 
\[
\charpoly{M} = \mydet{x I - M}.
\]
\item For a degree $d$ real rooted polynomial $p$, we will write its roots as
\[
\mu_1(p) \geq \mu_2(p) \geq \dots \geq \mu_d(p)
\]
\item For a matrix $C$ of size $\mm \times \nn$ and sets $S \subseteq[m]$ and 
$T \subseteq [n]$ with $|S|=|T|$, denote by $[C]_{S,T}$ the {\em $(S, 
T)$-minor} of $C$:
\[
\minor{C}{S}{T} = \mydet{\{ C_{ij} \}_{i \in S, j \in T}}
\]
\item For integers $r>0$, we will consider the normalized all ones vectors: 
$\one_r= 
(\frac{1}{\sqrt{r}},\frac{1}{\sqrt{r}}\dots,\frac{1}{\sqrt{r}} )^T$ $\in 
\mathbb{R}^r$
\item We will denote  the free sum of two matrices  by : $ A\oplus B := 
\diag{A}{B}$
\item $\orth_{\nn}$ will denote the group of $n \times n$ orthogonal matrices, 
$\perms_{\nn}$ the group of $n \times n$ permutation matrices and $\reps_{\nn}$ 
the 
standard representation of the symmetric group $S_{n+1}$.
\item For a set $X$ of real numbers, we will write
$\|X\|_1 
= 
\sum_{x \in X} x$. 

\end{itemize}
Furthermore, we will attempt to state (and prove) results that hold for general 
$\mm \times \nn$ matrices in their full generality.
This is particularly true for the results in Section~\ref{convo}, 
Section~\ref{sec:polytime} and also Proposition~\ref{prop:bound}.
However one can keep in mind that our eventual application will be the special 
case $\mm = \nn \kk$, and we will sometimes write quantities in terms of $\mm$ 
when it makes things cleaner\footnote{However, when we do, we will try to make 
this explicit as in Lemma~\ref{lem:better}.}.

\section{Convolution operations and interlacing properties}\label{convo}

\subsection{Rectangular convolution and rectangular bounds} 
To study expected characteristic polynomials of the graphs we will consider, we 
will use some new finite free convolution for singular values, whose properties 
are studied and introduced in \cite{GM1}.

Let $p$ and $q$ be degree $\nn$ polynomials with all non-negative roots:
\[
p(x)=\sum_{i=0}^{\nn}  x^{\nn-i} (-1)^ia_i \quad \text { and   }  \quad q(x)= \sum_{i=0}^{\nn} x^{\nn-i} (-1)^ib_i
\]
Then for an integer $m \geq n$ we define the {\em rectangular additive 
convolution} of $p$ 
and $q$ to be the polynomial
\[
p \recsum_{\mm,\nn} q(x) = \sum_{\ell=0}^{\nn} x^{\nn-\ell} 
(-1)^\ell\sum_{i+j=\ell} \frac{(\nn-i)! 
(\nn-j)!}{\nn!(\nn-\ell)!}\frac{(\mm-i)!(\mm-j)!}{\mm!(\mm-\ell)!}a_i b_j
\]
We note that this definition is slightly different than the one given in 
\cite{GM1}, because $m$ and $n$ have specific interpretations in this context 
(they will correspond to the dimensions of a matrix).
This was not the case in $\cite{GM1}$, where the treatment was restricted to 
polynomials.
The translation between the two definitions is
\[
p \recsum_{\mm,\nn} q(x)= p\mysum{\nn}{\mm-\nn}q.
\]
The relevance of this convolution to the current paper comes from the following 
theorem, which is a direct consequence of the ``Local'' theorem in 
\cite{new_poly}:
\begin{thm}\label{thm:convo}
If $A$ and $B$ are $m \times n$ matrices with $p(x)= \chi(A^TA), q(x)= 
\chi(B^TB)$
then
\[
p \recsum_{\mm,\nn} q(x) =
\expect{Q, R}{
\charpoly{ \left( A+ QBR^T 
\right)^T\left( A+ QBR^T\right)
}}
\]
where the expectation can be taken over any independent random matrices $Q$ and 
$R$ that are {\em minor-orthogonal}.
\end{thm}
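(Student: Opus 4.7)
The plan is to reduce the statement to the ``Local'' theorem of \cite{new_poly}: for any minor-orthogonal $Q$ and $R$, that theorem already computes $\expect{Q,R}{\charpoly{(A+QBR^T)^T(A+QBR^T)}}$ as an explicit polynomial combination of $\charpoly{A^T A}$ and $\charpoly{B^T B}$. What remains is to identify this combination with $p \recsum_{\mm,\nn} q$ coefficient by coefficient, which boils down to a combinatorial computation together with the translation $p \recsum_{\mm,\nn} q = p \mysum{\nn}{\mm-\nn} q$ recorded in the excerpt.

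To carry this out, I would first unfold the characteristic polynomial using Cauchy--Binet: for any $\mm \times \nn$ matrix $C$,
\[
\charpoly{C^T C}(x) \;=\; \sum_{\ell=0}^{\nn} (-1)^{\ell} x^{\nn-\ell} \sum_{\substack{S\subseteq[\mm],\, T\subseteq[\nn] \\ |S|=|T|=\ell}} \minor{C}{S}{T}^2.
\]
Applied to $C = A + QBR^T$, each rectangular minor expands, via multilinearity of the determinant followed by a Laplace-type expansion, as a signed sum over pairs $(V,U)$ with $V \subseteq S$, $U \subseteq T$, $|V|=|U|$, of products $\pm\, \minor{A}{S\setminus V}{T\setminus U} \cdot \minor{QBR^T}{V}{U}$. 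Squaring, summing over $S$ and $T$, and passing the expectation inside leaves an expression built from minors of $A$ and $B$, weighted by expectations of products of minors of $Q$ and $R$.

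The minor-orthogonal hypothesis is exactly what is needed at this stage: it forces the cross terms coming from distinct index sets in the Laplace expansion to vanish in expectation, and it evaluates each diagonal term to a quantity that depends only on the sizes of the index sets and on $\mm$ and $\nn$. A direct bookkeeping exercise — using $a_k = \sum_{|S|=|T|=k} \minor{A}{S}{T}^2$ (and similarly for $b_k$) along with the count $\binom{\mm-j}{i}\binom{\nn-j}{i}$ for choosing $V$ and $U$ in the complements of a fixed pair $(S',T')$ of size $j$ — then produces, for each $i+j = \ell$, a contribution equal to
\[
\frac{(\mm-i)!(\mm-j)!}{\mm!(\mm-\ell)!} \cdot \frac{(\nn-i)!(\nn-j)!}{\nn!(\nn-\ell)!}\, a_j b_i,
\]
which (after swapping the dummies $i \leftrightarrow j$) is exactly the coefficient of $(-1)^\ell x^{\nn-\ell}$ in $p \recsum_{\mm,\nn} q$. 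The main obstacle in the argument is the vanishing of these cross terms: it is not automatic from orthogonality of $Q$ and $R$ individually, and is precisely the cancellation that the ``minor-orthogonal'' notion in \cite{new_poly} is designed to encode. Once this cancellation is granted, the combinatorial identification above finishes the proof.
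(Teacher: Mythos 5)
Your proposal is correct, and it is worth noting that the paper itself offers no proof of Theorem~\ref{thm:convo}: it is imported wholesale as ``a direct consequence of the Local theorem'' of \cite{new_poly}. What you have written is essentially a self-contained derivation of the special case needed here, working directly from the defining identity \eqref{minor-orth} rather than from the general multivariate machinery. Your bookkeeping checks out: the coefficient of $(-1)^\ell x^{\nn-\ell}$ in the expected polynomial is $\sum_{|S|=|T|=\ell}\expect{}{\minor{A+QBR^T}{S}{T}^2}$; expanding each minor via Lemma~\ref{sumdet} and Cauchy--Binet, squaring, and using independence of $Q$ and $R$ together with minor-orthogonality kills every cross term (including those where $|V|\neq|V'|$, since the delta functions then vanish) and leaves the diagonal terms with weight $1/\bigl(\binom{\mm}{i}\binom{\nn}{i}\bigr)$ and sign $(+1)$, the squared signs cancelling. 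The count $\binom{\mm-j}{i}\binom{\nn-j}{i}$ for completing a fixed $(S',T')$ of size $j$ to $(S,T)$ of size $\ell=i+j$ then gives the coefficient
\[
\sum_{i+j=\ell}\frac{\binom{\mm-j}{i}}{\binom{\mm}{i}}\cdot\frac{\binom{\nn-j}{i}}{\binom{\nn}{i}}\,a_j b_i
=\sum_{i+j=\ell}\frac{(\mm-i)!(\mm-j)!}{\mm!(\mm-\ell)!}\cdot\frac{(\nn-i)!(\nn-j)!}{\nn!(\nn-\ell)!}\,a_j b_i,
\]
which by the $i\leftrightarrow j$ symmetry of the weight is exactly the coefficient in the definition of $p\recsum_{\mm,\nn}q$. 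The trade-off between the two routes is the usual one: the citation buys generality (the Local theorem handles a broader class of multivariate convolutions, of which this is one instantiation), while your direct computation buys transparency --- in particular it makes explicit that minor-orthogonality of $Q$ and $R$, plus their independence, is precisely the hypothesis that forces the quadrature to hold exactly rather than only for the Haar-orthogonal case. One small presentational remark: your opening paragraph frames the argument as a reduction to the Local theorem, but what you actually carry out is an independent proof that never invokes it; you should commit to one framing or the other.
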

\begin{rmk}\label{rmk:minor_orth}
We refer the reader to \cite{new_poly} for the precise definition of minor 
orthogonality --- for our purposes it is suffices to know that the following 
random $n \times n$ matrices have this property:
\begin{itemize}
\item a random $n \times n$ orthogonal matrix (under the Haar measure)
\item a random $n \times n$ signed permutation matrix (under the uniform 
measure)
\item a random member of the standard representation of $S_{n+1}$ (under the 
uniform measure).
\end{itemize}
\end{rmk}
Recalling that for bipartite matrices we have
\[
B= \offdiag{A}
\]
where $A$ is rectangular of size $\mm\times \nn$, we get the following 
corollary:
\begin{cor}
If $A$ and $B$ are $m \times n$ matrices with $p(x)= \chi(A^TA), q(x)= 
\chi(B^TB)$
then
\[
\Strans(p \recsum_{\mm,\nn} q)(x)
=  
\expectt
{Q \in O_{\mm}}
{R \in O_{\dd}}
{ 
    \charpoly{
        \offdiag{A}
        +
        \diag{Q}{R}
        \offdiag{B}
        \diag{Q}{R}^T
}}
\]
where we are using the operator on polynomials $\Strans \{p(x) \} = p(x^2)$.
\end{cor}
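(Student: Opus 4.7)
The plan is to reduce the statement to Theorem~\ref{thm:convo} by matching the matrix on the right-hand side to an off-diagonal block form, and then invoking the standard relation between the characteristic polynomial of a bipartite symmetric matrix and that of the associated Gram matrix.

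First I would do the block computation to simplify the matrix appearing inside the characteristic polynomial on the right-hand side. Writing $\diag{Q}{R}$ as a $2\times 2$ block matrix and multiplying out gives
\[
\diag{Q}{R}\offdiag{B}\diag{Q}{R}^T \;=\; \offdiag{QBR^T},
\]
so the full matrix becomes $\offdiag{A + QBR^T}$. This collapses the expression to the characteristic polynomial of a single off-diagonal block matrix in the combined $m \times n$ matrix $C := A + QBR^T$.

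Next I would invoke the standard fact that the nonzero eigenvalues of $\offdiag{C}$ are precisely the $\pm$ singular values of $C$ (together with $m-n$ trivial zero eigenvalues when $m>n$). This yields the identity
\[
\charpoly{\offdiag{C}} \;=\; x^{m-n}\,\Strans\bigl(\charpoly{C^T C}\bigr),
\]
which links the polynomial on the right-hand side to one of the form $\Strans(\charpoly{C^TC})$. Since $\Strans$ is a linear operator on polynomials (it just substitutes $x^2$ for $x$), it commutes with taking expectations, so I can pull the expectation past $\Strans$ and past the $x^{m-n}$ factor.

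Finally, Theorem~\ref{thm:convo} identifies $\expect{Q,R}{\charpoly{C^TC}}$ with $p \recsum_{m,n} q$, provided $Q$ and $R$ are independent minor-orthogonal random matrices. Remark~\ref{rmk:minor_orth} certifies that Haar-random orthogonal matrices $Q \in \orth_m$ and $R \in \orth_n$ qualify, so the hypotheses of Theorem~\ref{thm:convo} apply verbatim. There is no real obstacle here: the entire argument is a one-line block-matrix calculation plus the singular-value/eigenvalue dictionary for $\offdiag{C}$. The only point that warrants care is keeping track of the $x^{m-n}$ factor (for the case $m > n$) and confirming that the two random matrices specified in the expectation on the right satisfy the minor-orthogonality hypothesis needed to apply Theorem~\ref{thm:convo}.
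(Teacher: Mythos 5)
Your proposal is correct and follows exactly the derivation the paper intends (the paper states this corollary without proof, as an immediate consequence of the block identity $\diag{Q}{R}\offdiag{B}\diag{Q}{R}^T = \offdiag{QBR^T}$, the singular-value dictionary for $\offdiag{C}$, and Theorem~\ref{thm:convo} applied with Haar-random orthogonal matrices, which are minor-orthogonal by Remark~\ref{rmk:minor_orth}). You are also right to flag the $x^{\mm-\nn}$ factor: as written the corollary's two sides differ by exactly that factor when $\mm > \nn$ (and the subscript $O_{\dd}$ should read $O_{\nn}$), so your bookkeeping is if anything more careful than the paper's.
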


While the results of \cite{new_poly} do not (in general) preserve real 
rootedness, this is the case for the rectangular additive convolution, as was 
shown in \cite{GM1}:
\begin{lemma}
If $p$ and $q$ are degree $n$ polynomials with all nonnegative roots, then for 
all $m \geq n$,  $p\recsum_{\mm,\nn} q$ is a degree $n$ polynomial with all 
nonnegative roots.
In addition, the $\recsum_{\mm,\nn}$ operation is bilinear and associative.
\end{lemma}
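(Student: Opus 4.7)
The plan is to dispatch bilinearity and the degree count first (both read off the explicit formula), then establish nonnegativity of the roots via the matrix representation of Theorem~\ref{thm:convo}, and finally derive associativity by iterating that representation. Bilinearity is immediate: in
\[
p \recsum_{\mm,\nn} q(x) = \sum_{\ell=0}^{\nn} x^{\nn-\ell} (-1)^\ell \sum_{i+j=\ell} \frac{(\nn-i)!(\nn-j)!}{\nn!(\nn-\ell)!}\frac{(\mm-i)!(\mm-j)!}{\mm!(\mm-\ell)!} a_i b_j,
\]
each coefficient is a scalar-weighted double sum of products $a_i b_j$, visibly linear separately in the $a_i$'s and in the $b_j$'s. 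The $\ell=0$ term shows the leading coefficient is $1$, so the degree is exactly $\nn$.

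For the claim that the roots are nonnegative, I would lift the problem to matrices. Given $p, q$ with nonnegative roots, any such polynomial of degree $n$ is realizable as $\charpoly{X^TX}$ for some $\mm \times \nn$ matrix $X$ (prescribe singular values as the square roots of the roots, pad with zeros when $m>n$). Pick such $A$, $B$ with $\charpoly{A^TA}=p$ and $\charpoly{B^TB}=q$. Theorem~\ref{thm:convo} then gives
\[
p \recsum_{\mm,\nn} q(x) = \expect{Q,R}{\charpoly{(A+QBR^T)^T(A+QBR^T)}}
\]
for minor-orthogonal $Q$ and $R$. For each sample $(Q,R)$ the matrix $(A+QBR^T)^T(A+QBR^T)$ is PSD, so the polynomial inside the expectation has only nonnegative roots. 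The task is to show that this property survives averaging. I would do so by building an interlacing family: realize $Q$ and $R$ as products (or convex combinations) of elementary rank-one perturbations from a minor-orthogonal ensemble (e.g.\ the signed permutations of Remark~\ref{rmk:minor_orth}), and check at each step that the averaged characteristic polynomial has a common interlacer with the fibers. The Cauchy interlacing theorem applied to $X^TX$ under a rank-one update of $X$ gives the required interlacing at each step, so the conclusion propagates up the averaging tree to the full expectation.

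Associativity then comes for free from the matrix picture. Choosing $A,B,C$ with $\charpoly{A^TA}=p$, $\charpoly{B^TB}=q$, $\charpoly{C^TC}=r$ and applying Theorem~\ref{thm:convo} twice shows that both $(p\recsum_{\mm,\nn} q)\recsum_{\mm,\nn} r$ and $p\recsum_{\mm,\nn}(q\recsum_{\mm,\nn} r)$ equal
\[
\expect{Q_1,R_1,Q_2,R_2}{\charpoly{(A+Q_1 B R_1^T + Q_2 C R_2^T)^T(A+Q_1 B R_1^T + Q_2 C R_2^T)}}
\]
with the $Q_i,R_i$ independent and minor-orthogonal, using independence to collapse the iterated expectations into a single one. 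The main obstacle is the real-rootedness step: averages of real-rooted polynomials are not generally real-rooted, so the whole content of the lemma lives in producing the interlacing structure inside the expectation. Everything else is bookkeeping with the explicit formula or with the matrix model.
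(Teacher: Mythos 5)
First, a framing point: the paper does not prove this lemma itself --- it is imported verbatim from the companion paper \cite{GM1} --- so there is no in-paper argument to compare against, and the question is whether your proposed proof stands on its own. Your treatment of bilinearity and of the degree (both read off the explicit coefficient formula, with the $\ell=0$ term giving leading coefficient $a_0b_0=1$) is correct. Your associativity argument via the matrix model can also be made to work, but note the step you elide: the inner application of Theorem~\ref{thm:convo} is for a \emph{fixed realization} $A+Q_1BR_1^T$, so to identify the iterated expectation with $(p\recsum_{\mm,\nn} q)\recsum_{\mm,\nn} r$ you must pull the outer expectation through using linearity of $\,\cdot\recsum_{\mm,\nn} r$ (which you have from bilinearity), and for the other association you additionally need that the product of two independent Haar-orthogonal matrices is again Haar and independent. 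Alternatively, associativity is a purely formal symmetry of the coefficient formula and needs no matrices at all.

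The genuine gap is the nonnegative-rootedness claim, which is the entire content of the lemma. You rightly flag that averaging real-rooted polynomials does not preserve real-rootedness, but the mechanism you propose to rescue it does not work as stated. A rank-one update of the rectangular matrix $X$ induces a rank-\emph{two}, indefinite update of $X^TX$, so the Cauchy interlacing theorem does not hand you a common interlacer for the fibers of an elementary step; and establishing common interlacing for a family is \emph{equivalent} to showing that all of its convex combinations are real-rooted, which is essentially the statement you are trying to prove --- so your "check at each step that the averaged polynomial has a common interlacer with the fibers" is circular unless you supply an independent certificate. The swap-realizability machinery of Theorem~\ref{thm:mss4} provides such a certificate only for conjugations $MAM^T$ of symmetric matrices by permutations, not for additive perturbations $A+QBR^T$ of rectangular matrices by signed permutations or Stiefel-type ensembles, so it cannot be invoked off the shelf here. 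This is precisely why the preservation of nonnegative roots by $\recsum_{\mm,\nn}$ is a theorem of \cite{GM1}, proved there at the level of polynomials (by realizing the convolution through elementary root-preserving operations) rather than as a corollary of the quadrature formula. As written, the central claim of your proof is a program whose hardest step is asserted, not carried out.
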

The primary contribution of \cite{GM1}, however, is an inequality that holds 
between 
the roots of the rectangular convolution and the roots of the original 
polynomials.
To state it, we let $m \geq n$ be fixed and define the polynomial transformation
\[
V \{ p(x) \} = x^{\mm-\nn}p(x).
\]
Now for any real number $u$, we can define the quantity
\[
\mathcal{Q}^{\mm,\nn}_p (u)
= \maxroot{ (\Strans p) ( \Strans Vp) - u 
(\Strans p)' (\Strans Vp)' }.
\]
Of particular importance is the fact that, for a polynomial $p$ with 
nonnegative real roots, se have 
\[
\mathcal{Q}^{\mm,\nn}_p (0) 
= \maxroot{ (\Strans p) ( \Strans Vp)}
= \lambda_{\max}(\Strans p).
\]
The other two properties of $\mathcal{Q}^{\mm,\nn}_p (u)$ that we will need are 
stated in the following theorem, which is proved in \cite{GM1}.
\begin{thm} \label{Inmain}
For all degree $\nn$ polynomials $p$ and $q$ having only nonnegative roots, and 
for all integers $m \geq 0$ and real numbers $u \geq 0$, we have
\[
\frac{\partial}{\partial u} \mathcal{Q}^{\mm,\nn}_p (u) \geq 0
\]
and 
\[
\sqrt{\mathcal{Q}^{\mm,\nn}_{p \recsum_{\mm,\nn} q} (u^2)^2+(\mm-\nn)^2u^2} \leq \sqrt{\mathcal{Q}^{\mm,\nn}_{p} (u^2)^2+(\mm-\nn)^2u^2}+\sqrt{\mathcal{Q}^{\mm,\nn}_{q} (u^2)^2+(\mm-\nn)^2u^2} -(\mm+\nn)u
\]
\end{thm}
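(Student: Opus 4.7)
The plan is to first recast $\mathcal{Q}^{\mm,\nn}_p(u)$ via an implicit equation, from which the monotonicity claim is immediate, and then attack the subadditive inequality using the barrier-function methodology of \cite{MSS1}, suitably adapted to the rectangular setting.

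Letting $\phi_p(x) := p'(x)/p(x)$, and using $(\Strans p)(y) = p(y^2)$ together with $(\Strans Vp)(y) = y^{2(\mm-\nn)}p(y^2)$, a direct computation shows that the equation $(\Strans p)(\Strans Vp) = u (\Strans p)'(\Strans Vp)'$ simplifies, after dividing by $y^{2(\mm-\nn)}p(y^2)^2$ and substituting $x = y^2$, to
\[
4u\,\phi_p(x)\bigl((\mm-\nn) + x\,\phi_p(x)\bigr) = 1.
\]
Hence $\mathcal{Q}^{\mm,\nn}_p(u)^2$ is the largest $x$ above $\lambda_{\max}(p)$ solving this equation. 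Since $\phi_p$ is positive and strictly decreasing above $\lambda_{\max}(p)$, the left-hand side is decreasing in $x$ and increasing in $u$; the implicit function theorem then gives $\partial_u \mathcal{Q}^{\mm,\nn}_p \geq 0$, proving the first claim. Solving the quadratic for $\phi_p$ yields $\sqrt{(\mm-\nn)^2 + x/u} = 2x\phi_p(x) + (\mm-\nn)$, which after substituting $u \mapsto u^2$, $x = \mathcal{Q}^{\mm,\nn}_p(u^2)^2$, and multiplying through by $u$ becomes
\[
\sqrt{\mathcal{Q}^{\mm,\nn}_p(u^2)^2 + (\mm-\nn)^2 u^2} = u\bigl(2x\,\phi_p(x) + (\mm-\nn)\bigr).
\]
Thus the stated inequality, after canceling the common factor of $u$ and collecting terms, is algebraically equivalent to the subadditivity statement
\[
T_{p\recsum_{\mm,\nn} q}\!\bigl(\mathcal{Q}_{p\recsum q}^2\bigr) \leq T_p\!\bigl(\mathcal{Q}_p^2\bigr) + T_q\!\bigl(\mathcal{Q}_q^2\bigr), \qquad T_p(x) := x\,\phi_p(x) - \nn,
\]
where $T_p$ is positive above $\lambda_{\max}(p)$ and plays the role of a Cauchy-type barrier for $p$. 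To prove this, I would use Theorem~\ref{thm:convo} to realize $p \recsum_{\mm,\nn} q$ as $\expect{Q,R}{\charpoly{(A+QBR^T)^T(A+QBR^T)}}$, decompose $QBR^T$ as a sum of $\nn$ rank-one contributions, and inductively bound how $T$ evaluated at its implicit barrier point evolves under each averaged rank-one update.

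The main obstacle is proving the rank-one shift lemma in the rectangular setting. In the square/symmetric setting of \cite{MSS1} this is by now standard, but here the asymmetry between the $\mm$-dimensional row space and the $\nn$-dimensional column space forces one to carry a coupled pair of barriers, one on each side, and to verify that the quantity $\sqrt{\mathcal{Q}^2 + (\mm-\nn)^2 u^2}$ is the correct joint invariant under averaging over $Q \in \orth_{\mm}$ and $R \in \orth_{\nn}$. The $-(\mm+\nn)u$ correction then emerges naturally as the aggregate mass removed by the two-sided rank-one averaging over all $\nn$ updates---one unit of dimensional mass from the $\mm$-side and one from the $\nn$-side per update. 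Real-rootedness of the intermediate multivariate polynomials needed to justify each averaging step is provided by the determinantal representation behind Theorem~\ref{thm:convo}.
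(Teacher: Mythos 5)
First, note that the paper does not actually prove Theorem~\ref{Inmain}: it is imported verbatim from the companion paper \cite{GM1} (``which is proved in \cite{GM1}''), so there is no in-paper argument to compare against. Judged on its own terms, your proposal gets the correct reformulation but stops short of a proof. Your algebra is right and worth crediting: writing $\phi_p = p'/p$, the equation $(\Strans p)(\Strans Vp) = u(\Strans p)'(\Strans Vp)'$ does reduce (after dividing by $y^{2(\mm-\nn)}p(y^2)^2$ and setting $x=y^2$) to $4u\,\phi_p(x)\bigl((\mm-\nn)+x\phi_p(x)\bigr)=1$; since $x\phi_p(x)=\nn+\sum_i \lambda_i/(x-\lambda_i)$ is positive and nonincreasing above $\mu_1(p)$ when the roots are nonnegative and $\mm\geq\nn$, the left side is strictly decreasing in $x$ and increasing in $u$, which gives the monotonicity claim. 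Solving the quadratic and multiplying by $u$ correctly identifies $\sqrt{\mathcal{Q}_p(u^2)^2+(\mm-\nn)^2u^2}$ with $u\bigl(2x\phi_p(x)+(\mm-\nn)\bigr)$, and the target inequality is indeed equivalent to subadditivity of $T_p(x)=x\phi_p(x)-\nn$ evaluated at the respective implicit points. This matches the transform-based philosophy of \cite{GM1}.

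The gap is that the resulting subadditivity statement \emph{is} the theorem --- it is the entire technical content of the companion paper --- and you do not prove it; you explicitly defer ``the rank-one shift lemma in the rectangular setting'' as the main obstacle. Moreover, the route you sketch for it has a concrete problem: if you decompose $QBR^T=\sum_i \sigma_i (Qe_i)(Rf_i)^T$, the vectors $Qe_i$ (the columns of $Q$) are exchangeable but not independent, so you cannot average one rank-one update at a time and invoke an independent-update induction as in the Hermitian setting of \cite{MSS1}; the quadrature of Theorem~\ref{thm:convo} holds only for the full minor-orthogonal matrices $Q,R$. One must either replace the matrix picture by the differential/coefficient representation of $\recsum_{\mm,\nn}$ and track how the transform behaves under the elementary operators generating it, or set up a genuinely two-sided rank-one quadrature with its own (nontrivial) barrier update lemma. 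You assert that the $-(\mm+\nn)u$ correction ``emerges naturally,'' but that constant is exactly what such a lemma must produce, and nothing in the proposal forces it. As written, this is a correct reduction plus a plausible research plan, not a proof.
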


\subsection{Some adapted interlacing properties}\label{interlacing}
In this section, we show how to prove Proposition \ref{Ebound} from the 
interlacing properties of polynomials.
Recall the following definitions and theorems from \cite{MSS4}:
\begin{defn} [Random swap] A {\em random swap} is a matrix valued random 
variable which is equal to a transposition of two (fixed) indices i,j with 
probability $\alpha$  and equal to the identity with probability $ (1-\alpha)$ 
for some $\alpha \in [0,1]$. 
\end{defn} 
\begin{defn} [Realizibility by swaps]
A matrix-valued random variable $M$ supported on permutation matrices is said 
to be {\em realizable by swaps}  if the distribution of $M$ is the same as the 
distribution of a product of independent random swaps. 
\end{defn}
\begin{thm}\label{thm:mss4}
Let $A_1,\dots,A_{\dd}$ be $r \times r$ symmetric matrices and let $M_1, \dots, 
M_d$ be independent random permutations that are realizable by swaps.
Then
\[
\expect{}{\det \Big(xI - \sum_{i=1}^\dd M_i A_i M_i^T\Big)}
\]
is real-rooted and 
\[
\lambda_2\big(  \sum_{i=1}^\dd M_i A_i M_i^T\big) \leq \lambda_2\Big( \mathbb{E}\chi \big( ( \sum_{i=1}^\dd M_i A_i M_i^T\big)\Big)
\]
with non-zero probability.
\end{thm}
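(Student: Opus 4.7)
The plan is to prove this via the method of interlacing polynomials introduced in \cite{MSS1} and extended in \cite{MSS4}. I would build a tree whose root holds the polynomial $\expect{}{\det(xI - \sum_i M_i A_i M_i^T)}$ and whose leaves hold the deterministic polynomials $\det(xI - \sum_i \pi_i A_i \pi_i^T)$ for each specific outcome $(\pi_1,\dots,\pi_\dd)$. Because each $M_i$ is realizable by a product of independent random swaps, the internal levels of the tree can be indexed by the successive swap outcomes: each internal node corresponds to fixing a prefix of swap choices, each child corresponds to one of the two outcomes of the next swap (identity vs.\ transposition), and the polynomial at a node is the conditional expectation of $\charpoly{\sum_i M_i A_i M_i^T}$ given the prefix so far.

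The workhorse is the claim that this tree constitutes an interlacing family. It suffices to verify, at every internal node, that any convex combination of the two children polynomials is real-rooted; this is equivalent to the two children having a common interlacer. Concretely, the two children have the form $\charpoly{X}$ and $\charpoly{TXT^T}$, where $T = I - (e_i-e_j)(e_i-e_j)^T$ is the transposition corresponding to that swap and $X$ is the random symmetric matrix built so far. Since $T$ fixes $e_i + e_j$ and negates $e_i - e_j$, the conjugation $X \mapsto TXT^T$ decouples into a low-rank update on a two-dimensional invariant subspace, and one can exhibit an explicit common interlacer for the two characteristic polynomials.

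Granted the interlacing family property, both conclusions of the theorem fall out. Real-rootedness of the root polynomial follows by induction upward from the (trivially real-rooted) leaves: a convex combination of two real-rooted polynomials with a common interlacer is real-rooted. The defining property of an interlacing family then gives some leaf polynomial $p$ with $\mu_2(p) \leq \mu_2$ of the root polynomial, and translating back to matrices this leaf corresponds to a (deterministic) outcome $(\pi_1,\dots,\pi_\dd)$ for which $\lambda_2(\sum_i \pi_i A_i \pi_i^T) \leq \mu_2(\expect{}{\charpoly{\sum_i M_i A_i M_i^T}})$, an outcome that occurs with strictly positive probability by construction.

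The main obstacle is the common-interlacer step for a single swap. Cauchy interlacing (which handles rank-one perturbations) is not directly applicable because transposition conjugation is a genuine rank-two update. The strategy from \cite{MSS4} exploits the invariant splitting induced by $T$ to reduce the effective perturbation to rank one on a well-chosen subspace, and then constructs an explicit auxiliary polynomial whose roots interlace those of both children. Once this local lemma is in hand, the global argument is a direct application of the interlacing-family machinery, and the full theorem is assembled by propagating the property up through all swaps of all $d$ permutations.
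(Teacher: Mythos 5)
First, note that the paper does not prove this statement at all: it is imported verbatim from \cite{MSS4} (``Recall the following definitions and theorems from \cite{MSS4}''), so you are reproving a black box. Your global architecture --- a tree indexed by successive swap outcomes, the interlacing-family property, and then the standard conclusions --- is the correct and essentially unavoidable skeleton. The gap is that the entire content of the theorem lives in the ``workhorse'' step you defer, and the mechanism you propose for it does not work as stated. At an internal node the two children are \emph{conditional expectations} over all remaining swaps, not polynomials of the form $\charpoly{X}$ and $\charpoly{TXT^T}$ for a fixed $X$; a convex combination $\lambda p_{\mathrm{id}} + (1-\lambda)p_{\mathrm{swap}}$ of the two children is itself an expected characteristic polynomial in which that particular swap now occurs with probability $\lambda$. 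So establishing a common interlacer at \emph{every} node is equivalent to proving real-rootedness of $\expect{}{\charpoly{\sum_i M_i A_i M_i^T}}$ for \emph{arbitrary} swap probabilities $\alpha\in[0,1]$ --- which is precisely the main theorem, not a local lemma you can dispose of separately. Your upward induction (``common interlacer $\Rightarrow$ convex combinations real-rooted'') therefore presupposes the thing being proved.

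Even at the leaf level, where the children really are $\charpoly{B + M_jA_jM_j^T}$ and $\charpoly{B + TM_jA_jM_j^TT^T}$ (the swap conjugates only one summand, so the two matrices are \emph{not} similar), the perturbation $TM_jA_jM_j^TT^T - M_jA_jM_j^T$ is a genuinely rank-two, indefinite symmetric matrix; generic rank-two perturbations do not admit common interlacers, and the asserted ``reduction to rank one on an invariant subspace'' is not substantiated --- the range of the perturbation is spanned by $e_i-e_j$ \emph{and} by $M_jA_jM_j^T(e_i-e_j)$, so no rank-one reduction is available. What saves the day is the special isospectral structure of conjugation, and \cite{MSS4} exploits it not by exhibiting explicit interlacers but by showing that averaging over a random swap corresponds to an operator on a multivariate determinantal (real stable) polynomial that preserves real stability, for every $\alpha\in[0,1]$; real-rootedness of all the convex combinations, hence the common interlacing at every node, then follows in one stroke. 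Until you supply that stability argument (or a genuine substitute for it), the proof is missing its central ingredient.
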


It is also shown in \cite{MSS4}[Lemma~3.5] that $P \oplus S$
is realizable by swaps when $P \in \perms_\mm$ and $S \in \perms_n$ 
are picked independently and uniformly at random.
As such, Theorem~\ref{thm:mss4} directly implies Proposition~\ref{Ebound}.

\subsection{Quadrature with permutation matrices} \label{Squad}

There is still a small issue with using the rectangular convolution in the 
context of random permutations matrices.
Recall that Theorem~\ref{thm:convo} was stated to hold for random matrices that 
were {\em minor-orthogonal} and that three such random matrices were listed in 
Remark~\ref{rmk:minor_orth}.
However, the collection of matrices in $\perms_n$ is not one of those 
(and, in fact, the theorem is not true if one tries to substitute 
$\perms_n$ into the formula.

However, a slight variant of the permutation matrices {\em is} one of the 
listed random matrices, in the form of the standard representation 
$\reps_n$.
For our purposes, it suffices to know that the matrices in $\reps_n$ can 
be formed by mapping the matrices $\perms_{n+1}$ on to the set of $n 
\times n$ matrices in a way with sends their common eigenvector (the all $1$ 
vector) to $0$.
Hence instead of applying Theorem~\ref{thm:convo} directly to the 
characteristic polynomials of our adjacency matrices, we will apply them to the 
polynomials that we get after projecting them orthogonally to the all $1$ 
vector.
Fortunately (due to the biregularity), the all $1$'s vector is a common 
eigenvector of all of the other matrices we will need to consider, and so this 
projection will simply pull off a single (trivial) root without affecting the 
remaining ones.
All of this is stated in the following theorem (recall that we are using 
$\one_\nn$ to denote the normalized version of the all $1$'s vector):
\begin{thm}\label{thm:quad}
Let $A$ and $B$ be $\mm \times \nn$ matrices for which 
\[
A \one_{\nn}=a\one_\mm 
\AND
A^T\one_\mm=a\one_\nn
\AND 
B\one_{\nn}=b\one_\mm
\AND
B^T\one_\mm=b\one_\nn
\]
so that 
\[
\charpoly{A^T A} = (x - a^2)p(x)
\AND
\charpoly{B^TB} = (x -b^2) q(x).
\]
Then
\begin{align*}
&\expectt 
{P \in \perms_\mm}
{S \in \perms_\nn}
{
    \Strans \charpoly{
        \offdiag{A}
        +
        (P \oplus S)
        \offdiag{B}
        (P \oplus S)^T
    }
}
\\=&
\Strans \left\{ \left[ x-(a+b)^2 \right] \left[p \recsum_{\mm-1,\nn-1}q \right] 
\right\}
\\=& 
\left( x^2 - (a+b)^2 \right) \mathbb{S} (p \recsum_{\mm-1,\nn-1} q)
\end{align*}
\end{thm}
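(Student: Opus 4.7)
The plan is to use the shared $\one$-vector structure of $A$, $B$, $P$, and $S$ to peel off a trivial eigenvalue equal to $(a+b)^2$, then invoke Theorem~\ref{thm:convo} on the $(\mm-1)\times(\nn-1)$ reduction that remains. Setting $X := A + P B S^T$, the hypothesized singular vector relations together with $P\one_\mm = \one_\mm$ and $S\one_\nn = \one_\nn$ combine to give $X\one_\nn = (a+b)\one_\mm$ and $X^T\one_\mm = (a+b)\one_\nn$, so $\one_\nn$ is an eigenvector of $X^T X$ with eigenvalue $(a+b)^2$.

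To isolate the reduced block I would choose orthogonal matrices $U_\mm$ and $U_\nn$ whose first columns are $\one_\mm$ and $\one_\nn$, so that conjugation yields block decompositions
\[
U_\mm^T A U_\nn = a \oplus A', \quad U_\mm^T B U_\nn = b \oplus B', \quad U_\mm^T P U_\mm = 1 \oplus P', \quad U_\nn^T S U_\nn = 1 \oplus S',
\]
where $A'$ and $B'$ are $(\mm-1)\times(\nn-1)$ matrices with $\charpoly{(A')^T A'} = p$ and $\charpoly{(B')^T B'} = q$, while $P' \in \reps_{\mm-1}$ and $S' \in \reps_{\nn-1}$ because the standard representation of $S_\mm$ is precisely the restriction of the permutation representation to $\one_\mm^\perp$. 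A short argument (faithfulness of the standard representation together with a cardinality count) shows that $P$ uniform on $\perms_\mm$ pushes forward to $P'$ uniform on $\reps_{\mm-1}$, and similarly for $S'$. Assembling everything yields $U_\mm^T X U_\nn = (a+b) \oplus X'$ with $X' := A' + P' B' (S')^T$, and hence $\charpoly{X^T X} = (x-(a+b)^2)\,\charpoly{(X')^T X'}$.

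Since $\reps_{\mm-1}$ and $\reps_{\nn-1}$ are minor-orthogonal (Remark~\ref{rmk:minor_orth}), applying Theorem~\ref{thm:convo} to $A'$, $B'$ with the independent uniform factors $P'$, $S'$ gives $\expect{P', S'}{\charpoly{(X')^T X'}} = p \recsum_{\mm-1, \nn-1} q$. Taking expectation in the factored identity for $\charpoly{X^T X}$ and then applying $\Strans$ (which is linear and hence commutes with expectation) produces the desired conclusion, since the theorem's left-hand side is, up to notational repackaging and the trivial $x^{\mm-\nn}$ factor from the rectangular structure, exactly $\expect{P, S}{\Strans \charpoly{X^T X}}$, and the second equality in the statement is simply $\Strans\{x f\} = x^2\, \Strans f$.

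The main obstacle is verifying that the restriction map $P \mapsto P'$ from $\perms_\mm$ onto $\reps_{\mm-1}$ really is a measure-preserving bijection, so that Theorem~\ref{thm:convo} (which requires uniform samples from the minor-orthogonal family $\reps_{\mm-1}$) can in fact be applied when one starts from uniform samples on $\perms_\mm$. Once this is secured, the rest is careful block-matrix bookkeeping.
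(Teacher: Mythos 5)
Your proposal is correct and follows essentially the same route as the paper's proof: isolate the common singular vector $\one$, conjugate by orthogonal matrices to split off the $(a+b)$ block, observe that uniform $P \in \perms_\mm$ and $S \in \perms_\nn$ restrict to uniform elements of the minor-orthogonal families $\reps_{\mm-1}$ and $\reps_{\nn-1}$, and then apply Theorem~\ref{thm:convo} to the reduced $(\mm-1)\times(\nn-1)$ matrices. The point you flag as the ``main obstacle'' is exactly how the paper defines $\reps_{\mm-1}$ (the pushforward of the uniform measure on $\perms_\mm$ under restriction to $\one_\mm^\perp$), so it is not a genuine gap.
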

\begin{proof}
We begin by applying change of basis to $A$ and $B$ related to their singular 
value decompositions. 
More precisely, let $U \in \orth_\mm, V \in \orth_\nn$, such that the last 
column of $U$ 
is  $\one_\mm$ and the last vector of $V$ is  $\one_\nn$. 
As for any $P\in 
\perms_\mm, S \in \perms_\nn$ : $P\one_\mm=\one_\mm, S \one_\nn=\one_\nn$, we 
get 
\[
U^TAV=\hat{A} \oplus a 
\qquad  
U^TBV=\hat{B} \oplus b 
\qquad  
U^TPU=\hat{P} \oplus 1
\qquad
V^TSV=\hat{S} \oplus 1
\]
where the matrices $\hat{P}$ and $\hat{S}$ are random elements of 
$\reps_{\mm-1}$ and $\reps_{\nn-1}$ respectively.
Conjugating by $U \oplus V$ and using the invariance of the determinant by 
change of basis we get:
\begin{align*}
&
\charpoly{
\offdiag{A}
+
(P \oplus S)
\offdiag{B}
(P \oplus S)^T
}
\\=&
\charpoly{
\offdiag{(\hat{A} \oplus a)} +
(\hat{P} \oplus 1) \oplus (\hat{S} \oplus 1)
\offdiag{(\hat{B} \oplus b)}
(\hat{P} \oplus 1) \oplus (\hat{S} \oplus 1)^T}
\\ = &
\charpoly{\offdiag{ \big( (\hat{A} + \hat{P}\hat{B}\hat{S}^T)\oplus (a+b) 
\big)}}
\end{align*}
So averaging gives
\begin{align*}
& 
\expectt{P \in \perms_m}{S \in \perms_n}{
\Strans
\charpoly{
\left( (\hat{A} + \hat{P}\hat{B}\hat{S}^T )\oplus (a+b)\right) \big(   
  (\hat{A} + \hat{P}\hat{B}\hat{S}^T)\oplus (a+b) \big)^T
}}
\\=&
\left(x^2-(a+b)^2\right)  
\expectt{P \in \perms_\mm}{S \in \perms_\nn}{ 
\Strans
\charpoly{
\left( \hat{A} + \hat{P}\hat{B}\hat{S}^T) \right) \left(   
\hat{A} + \hat{P}\hat{B}\hat{S}^T \right)^T
}}
\\=& 
\left( x^2 - (a+b)^2\right) 
\expectt{P \in \perms_\mm}{S \in \perms_\nn}{ 
\charpoly{
    \offdiag{\hat{A}} 
    +
    (\hat{P} \oplus \hat{S})
    \offdiag{\hat{B}}
    (\hat{P} \oplus \hat{S})^T
}}
\\=&
\left( x^2 - (a+b)^2\right) 
\expectt{\hat{P} \in \reps_{\mm-1}}{\hat{S} \in \reps_{\nn-1}}{ 
\charpoly{
    \offdiag{\hat{A}} 
    +
    (\hat{P} \oplus \hat{S})
    \offdiag{\hat{B}}
    (\hat{P} \oplus \hat{S})^T
}}
\\=&
\left( x^2 - (a+b)^2\right) (p \recsum_{\mm-1,\nn-1} q)
\end{align*}
where the last equality comes from the fact that $p(x)= 
\charpoly{\hat{A}^T\hat{A}}$ and $q(x)=\charpoly{\hat{B}^T\hat{B}}$ and that 
$\reps_\mm$ and $\reps_\nn$ are minor-orthogonal and Theorem~\ref{thm:convo}.
%
\end{proof}
Applying Theorem~\ref{thm:quad} inductively gives the following corollary:
\begin{cor} \label{Ecor}
Let $A_1,\dots,A_k$ be $\mm \times \nn $ matrices for which 
\[
A_i \one_{\nn}= a_i \one_\mm
\AND
{A_i}^T \one_\mm= a_i \one_\nn
\AND
\charpoly{A_i^TA_i} 
= (x-a_i^2)p_i(x)
\]
for all  $i \in [k]$.
Then
\[
\expectt
{P_1,\dots,P_k}
{S_1,\dots,S_k}
{ 
\charpoly{
\sum_{i=1}^k 
    (P_i \oplus S_i) 
    \offdiag{A_i}
    (P_i \oplus S_i)^T
}}
= 
\left(x^2- \left(\sum_{i=1}^la_i\right)^2 \right) \mathbb{S} (p_1 
\recsum 
\dots \recsum p_k)
\]
where $\recsum$ here is short for $\recsum_{\mm-1, \nn-1}$.
\end{cor}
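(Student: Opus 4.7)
The plan is to prove Corollary~\ref{Ecor} by induction on $k$, using Theorem~\ref{thm:quad} as the engine at each step. The base case $k=1$ is immediate: conjugation by $(P_1\oplus S_1)$ does not change the characteristic polynomial, so the expected characteristic polynomial of $(P_1\oplus S_1)\offdiag{A_1}(P_1\oplus S_1)^T$ equals $\charpoly{\offdiag{A_1}} = \Strans\charpoly{A_1^T A_1} = (x^2-a_1^2)\Strans p_1$, matching the claimed formula.

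For the inductive step, fix $k\geq 2$ and condition on $P_1,\dots,P_{k-1},S_1,\dots,S_{k-1}$. Write the partial sum as
\[
\sum_{i=1}^{k-1} (P_i\oplus S_i)\offdiag{A_i}(P_i\oplus S_i)^T = \offdiag{B}, \qquad B := \sum_{i=1}^{k-1} P_i A_i S_i^T.
\]
The first thing I would check carefully is that $B$ inherits the all-ones eigenvector property from the $A_i$: since $P_i\one_\mm = \one_\mm$ and $S_i\one_\nn = \one_\nn$ for every $i$, we get $B\one_\nn = (\sum_{i<k} a_i)\one_\mm$ and $B^T\one_\mm = (\sum_{i<k} a_i)\one_\nn$. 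This is exactly the hypothesis needed to apply Theorem~\ref{thm:quad} with $B$ playing the role of ``$A$'' and $A_k$ playing the role of ``$B$'' in that theorem, so writing $\charpoly{B^T B} = (x-(\sum_{i<k}a_i)^2) q_B(x)$, the inner expectation over $(P_k,S_k)$ gives
\[
\expectt{P_k}{S_k}{\charpoly{\offdiag{B} + (P_k\oplus S_k)\offdiag{A_k}(P_k\oplus S_k)^T}} = \bigl(x^2 - \bigl(\textstyle\sum_{i=1}^k a_i\bigr)^2\bigr)\Strans\bigl(q_B\recsum_{\mm-1,\nn-1} p_k\bigr).
\]

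Now I would take the outer expectation over $P_1,\dots,P_{k-1},S_1,\dots,S_{k-1}$. The scalar prefactor is deterministic, $\Strans$ is linear, and $\recsum_{\mm-1,\nn-1}$ is bilinear (per the lemma quoted in Section~\ref{convo}), so the outer expectation pushes through to give $(\expect{}{q_B})\recsum_{\mm-1,\nn-1} p_k$. The only remaining task is to identify $\expect{}{q_B}$. But the inductive hypothesis, applied to $A_1,\dots,A_{k-1}$, says precisely that
\[
\expect{}{\charpoly{\offdiag{B}}} = \bigl(x^2 - \bigl(\textstyle\sum_{i=1}^{k-1}a_i\bigr)^2\bigr)\Strans\bigl(p_1\recsum\dots\recsum p_{k-1}\bigr),
\]
and since $\charpoly{\offdiag{B}} = \Strans\charpoly{B^T B}$ and $\Strans$ is injective on the polynomials involved, this forces $\expect{}{q_B} = p_1\recsum\dots\recsum p_{k-1}$. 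Substituting back completes the induction.

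The only conceptual obstacle is the interplay between taking the expectation and the nonlinear operation of extracting the ``trivial'' factor $(x-(\sum a_i)^2)$ from $\charpoly{B^T B}$; one needs to observe that this factor is deterministic (because the all-ones eigenvalue sum is fixed once the $a_i$ are fixed), which is exactly what the biregularity buys us. Everything else is bookkeeping: bilinearity of $\recsum_{\mm-1,\nn-1}$, linearity of $\Strans$ and of expectation, and the preservation of the common eigenvector $\one$ under sums and under multiplication by permutation matrices.
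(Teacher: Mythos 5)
Your proof is correct and is exactly the argument the paper intends: the paper's entire justification for Corollary~\ref{Ecor} is the one-line remark that it follows by applying Theorem~\ref{thm:quad} inductively, and your write-up supplies precisely the details of that induction (the partial sum inheriting the $\one$-eigenvector property from the permutation-invariance of $\one$, the deterministic trivial factor, and bilinearity of $\recsum_{\mm-1,\nn-1}$ to push the outer expectation through). No gaps.
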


\section{Ramanujan Bound} \label{bound}

In this section we use the results of the previous section to get the Ramanujan 
bound (proving Theorem \ref{thm:main}). 
We recall that the 
adjacency matrix of any $(n, k, d)$-graph has the form:
\[
B = \sum_{i=1}^\dd   
(P_i \oplus S_i) 
\offdiag{I_\nn^{[\kk]}}
(P_i \oplus S_i )^T 
\qquad\text{where}\qquad   
I_\nn^{[\kk]}
= \begin{pmatrix}I_\nn\\I_\nn\\\vdots\\I_\nn \end{pmatrix}
\]
for some permutation matrices $P_i$ and $S_i$.
We first note that $B$ satisfies the eigenvector properties needed to use 
Corollary~\ref{Ecor}.
In particular, we have for $A = I_\nn^{[\kk]}$ and $m = k n$, 
\[
A\one_{\nn}= \sqrt{\frac{\mm}{\nn}}\one_{\mm}= \sqrt{\kk} \one_{\mm}
\AND
A^T\one_{\mm}=\kk \sqrt{\frac{\nn}{\mm}} \one_{\nn}= \sqrt{\kk} \one_n
\]
and 
\[
\charpoly{A^T A} = (x - k)^n = (x - \sqrt{k}^2)(x-k)^{n-1}
\]
Hence by Corollary~\ref{Ecor} we have:
\begin{align*}
\expect{}{
\charpoly{B}
}
&= \left(x^2- \left(\sum_{i=1}^\dd \sqrt{\kk}\right)^2\right) 
\mathbb{S} (p \recsum_{\mm-1,\nn-1} \dots \recsum_{\mm-1,\nn-1}p)  \quad \text{ 
($\dd$ times)}
\\&=  
(x- \dd\sqrt{\kk}) (x+\dd\sqrt{\kk})\mathbb{S} (p 
\recsum_{\mm-1,\nn-1} \dots \recsum_{\mm-1,\nn-1}p) 
\end{align*}
for $p(x):= (x-\kk)^{\nn-1}$. 
As the first two factors are the ``trivial'' eigenvalues, we are then 
interested in the largest root of the polynomial
\begin{equation}\label{eq:goal}
\mathbb{S} (p \recsum_{\mm-1,\nn-1}\dots\recsum_{\mm-1,\nn-1}p). 
\end{equation}
To this end, we consider the polynomial
\begin{equation}\label{eq:qpoly}
q_\dd^{[\theta,\mm,\nn]} 
:= \mathbb{S} \underbrace{\left(  
(x-\theta)^\nn\recsum_{\mm,\nn} 
(x-\theta)^\nn
\recsum_{\mm,\nn} \dots\recsum_{\mm,\nn}(x-\theta)^\nn \right) }_{ \dd 
\text{ times} },
\end{equation}
For the sake of generality, we prove the following proposition for general 
values of $\mm \geq n$ here (not $\mm = \kk \nn$) as in the rest of this 
section.

\begin{prop}\label{prop:bound}
For all $u, \theta \geq 0$ and all integers $\mm, \nn, d$ with $\mm \geq \nn$
we have
\[
\lambda_1\left(q_\dd^{[\theta,\mm,\nn]}\right) \leq \Big(\dd \sqrt{\theta + 
\mm^2u^2}- \dd \mm u+(\mm+\nn)u \Big)^2- (\mm-\nn)^2u^2=: 
R^{[\theta,\mm,\nn]}_\dd(u)
\]
\end{prop}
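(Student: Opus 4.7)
Proof plan. The plan is to use the two properties of the $\mathcal{Q}$-function in Theorem~\ref{Inmain}: monotonicity in $u$ (to move from the largest root at $u=0$ to any $u$) and the triangle-like inequality under $\recsum_{\mm,\nn}$ (to reduce the $\dd$-fold convolution to $\dd$ copies of the single-factor case). Letting $r_\dd = (x-\theta)^\nn \recsum_{\mm,\nn} \cdots \recsum_{\mm,\nn} (x-\theta)^\nn$, so that $q_\dd^{[\theta,\mm,\nn]} = \Strans r_\dd$, the identity $\mathcal{Q}^{\mm,\nn}_{r_\dd}(0) = \lambda_{\max}(\Strans r_\dd) = \lambda_1(q_\dd^{[\theta,\mm,\nn]})$ from the excerpt will let us transfer everything back to $\lambda_1$ at the end.

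First I would compute $\mathcal{Q}^{\mm,\nn}_{(x-\theta)^\nn}(u^2)$ in closed form. With $p(x)=(x-\theta)^\nn$, one has $\Strans p(x)=(x^2-\theta)^\nn$ and $\Strans Vp(x)=x^{2(\mm-\nn)}(x^2-\theta)^\nn$. Differentiating and factoring out the common factor $x^{2(\mm-\nn)}(x^2-\theta)^{2(\nn-1)}$ reduces $(\Strans p)(\Strans Vp) - u^2(\Strans p)'(\Strans Vp)'$ to the quadratic in $y = x^2$ given by $(y-\theta)^2 - 4\nn u^2[\mm y - (\mm-\nn)\theta]$. The quadratic formula gives the larger root $y^{*} = \theta + 2\mm\nn u^2 + 2\nn u \sqrt{\theta+\mm^2 u^2}$, and a short algebraic manipulation yields the clean identity
\[
\mathcal{Q}^{\mm,\nn}_{(x-\theta)^\nn}(u^2)^2 + (\mm-\nn)^2 u^2 \;=\; \bigl(\sqrt{\theta+\mm^2 u^2} + \nn u\bigr)^2 .
\]

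Next I would apply the subadditivity half of Theorem~\ref{Inmain} inductively $\dd-1$ times (using associativity of $\recsum_{\mm,\nn}$ from the preceding lemma). Substituting the identity above on each factor gives
\[
\sqrt{\mathcal{Q}^{\mm,\nn}_{r_\dd}(u^2)^2 + (\mm-\nn)^2 u^2} \;\leq\; \dd\bigl(\sqrt{\theta+\mm^2 u^2} + \nn u\bigr) - (\dd-1)(\mm+\nn)u \;=\; \dd\sqrt{\theta+\mm^2 u^2} - \dd\mm u + (\mm+\nn)u .
\]
By the monotonicity half of Theorem~\ref{Inmain} we have $\lambda_1(q_\dd^{[\theta,\mm,\nn]}) = \mathcal{Q}^{\mm,\nn}_{r_\dd}(0) \leq \mathcal{Q}^{\mm,\nn}_{r_\dd}(u^2)$, and squaring the previous display and moving the $(\mm-\nn)^2 u^2$ term to the right-hand side yields exactly $R^{[\theta,\mm,\nn]}_\dd(u)$.

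Main obstacle. Steps two and three are each one-line consequences of Theorem~\ref{Inmain}. The only step with any real content is the closed-form evaluation of $\mathcal{Q}^{\mm,\nn}_{(x-\theta)^\nn}(u^2)$: carrying out the differentiation, identifying the common factor that turns the resulting expression into a quadratic in $x^2$, solving that quadratic, and then recognising the answer in the perfect-square-minus-square form $\bigl(\sqrt{\theta+\mm^2 u^2}+\nn u\bigr)^2 - (\mm-\nn)^2 u^2$. That particular form is exactly what makes the iteration of the triangle inequality telescope cleanly across $\dd$ convolutions into the clean $\dd\sqrt{\theta+\mm^2u^2} - \dd\mm u + (\mm+\nn)u$ appearing in $R^{[\theta,\mm,\nn]}_\dd(u)$.
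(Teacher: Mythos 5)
Your proposal is correct and follows essentially the same route as the paper's proof: bound $\lambda_1$ by $\mathcal{Q}^{\mm,\nn}(u^2)$ via monotonicity, iterate the subadditivity inequality of Theorem~\ref{Inmain} to reduce to the single-factor case, and evaluate that case as a quadratic in $x^2$ whose shifted root yields the perfect square $\bigl(\nn u+\sqrt{\theta+\mm^2u^2}\bigr)^2$. Your closed-form quadratic $(y-\theta)^2-4\nn u^2[\mm y-(\mm-\nn)\theta]$ is exactly the paper's $(x^2-\theta)^2-4\mm\nn u^2(x^2-\theta)-4\nn^2\theta u^2$ rewritten, so the two arguments coincide.
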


\begin{proof}
By the first part of Theorem~\ref{Inmain}, we can directly upper-bound for all 
$u \geq 0$:
\[
\lambda_1\left(q_\dd^{[\theta,\mm,\nn]}\right) \leq 
\mathcal{Q}^{\mm,\nn}_{q_\dd} 
(u^2):={t_\dd^{[\theta,\mm,\nn]}}.
\]
Furthermore, by the second part of Theorem~\ref{Inmain}, we have
\begin{align}
\sqrt{{t_\dd^{[\theta,\mm,\nn]}}^2+(\mm-\nn)^2u^2} 
&\leq 
\sqrt{{t_{\dd-1}^{[\theta,\mm,\nn]}}^2+(\mm-\nn)^2u^2}+\sqrt{{t_1^{[\theta,\mm,\nn]}}^2+(\mm-\nn)^2u^2}
 -(\mm+\nn)u \notag
\\&\leq 
\dd \sqrt{{t_1^{[\theta,\mm,\nn]}}^2+(\mm-\nn)^2u^2} -(\dd-1)(\mm+\nn)u 
\label{eq:td}.
\end{align}
We claim that 
\begin{equation}\label{eq:t1}
\sqrt{({ t_1^{[\theta,\mm,\nn]}}^2 +(\mm-\nn)^2u^2 } = \nn u + \sqrt{\theta + 
\mm^2 u^2}
\end{equation}
which, if it holds, can be plugged into \eqref{eq:td} to imply that
\[
\sqrt{{t_\dd^{[\theta,\mm,\nn]}}^2+(\mm-\nn)^2u^2}
\leq 
\dd \sqrt{\theta + \mm^2 u^2} - \dd \mm u + (\mm+\nn) u
\]
for any value of $u \geq 0$ (hence proving the proposition).
To see that \eqref{eq:t1} holds, we note that (by definition)
\[
{t_1^{[\theta,\mm,\nn]}}^2 = \maxroot{(x^2 - \theta)^2 - 4 \mm \nn u^2 (x^2 - \theta) - 4 \nn^2 \theta u^2}
\]
which, by the quadratic equation gives
\[
{t_1^{[\theta,\mm,\nn]}}^2 - \theta 
= \maxroot{x^2 - 4 \mm \nn u^2 x - 4 \nn^2 \theta u^2}
= 2 \nn \mm u^2 + 2\nn u\sqrt{\theta + \mm^2u^2}.
\]
Hence we have
\begin{align*}
{t_1^{[\theta,\mm,\nn]}}^2 + (\mm-\nn)^2u^2 
&= \theta + 2 \nn \mm u^2 + 2 \nn u\sqrt{\theta + \mm^2 u^2} + (\mm-\nn)^2 u^2
\\&= \theta + \mm^2 u^2 + \nn^2 u^2 +2 \nn u\sqrt{\theta + \mm^2 u^2}
\\&= \left( \nn u + \sqrt{\theta + \mm^2 u^2}\right)^2
\end{align*}
which in turn proves \eqref{eq:t1}.
\end{proof}

Since Proposition~\ref{prop:bound} holds for any $u \geq 0$, we are free to 
choose the best one (which will obviously depend on the value of $\theta$).
For $\theta \geq 2$, simple calculus provides us with the value
\[
u_0 = \frac{\sqrt{\theta}(v-1)}{2 \mm \sqrt{v}}.
\]
where (for the sake of clarity) we have used the substitution $v = 
\sqrt{\dd-1}\sqrt{\dd 
\mm/\nn- 
1}$.
Plugging this into Proposition~\ref{prop:bound}, then gives the following:
\begin{cor}\label{cor:OK}
For all $\theta \geq 2$ and all positive integers $d, n$ and $m \geq n$, we have
\[
\lambda_1\left(q_\dd^{[\theta,\mm,\nn]}\right) 
\leq 
\sqrt{\frac{\theta\nn}{\mm}} \left(\sqrt{\dd-1} + \sqrt{(\dd \mm/\nn- 
1}\right).
\]
\end{cor}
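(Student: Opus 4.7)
The plan is to invoke Proposition~\ref{prop:bound} with the specific choice $u_0 := \sqrt{\theta}(v-1)/(2\mm\sqrt{v})$ suggested in the paper, where $v := \sqrt{(\dd-1)(\dd\mm/\nn - 1)}$, and verify by direct computation that $R^{[\theta,\mm,\nn]}_\dd(u_0)$ equals the square of the stated right-hand side. Note that $u_0 \geq 0$ since $\dd \geq 2$ and $\mm \geq \nn$ force $v \geq 1$ (and the $\dd=1$ case reduces to checking $v = 0$, where $u_0 = 0$ is the relevant choice). Motivation for this $u_0$ comes from solving $\frac{d}{du}R^{[\theta,\mm,\nn]}_\dd(u) = 0$ explicitly, but it is cleaner to take the value as given and verify.

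The first simplification I would establish is the algebraic identity
\[
\theta + \mm^2 u_0^2 = \frac{\theta\bigl[4v + (v-1)^2\bigr]}{4v} = \frac{\theta (v+1)^2}{4v},
\]
so that $\sqrt{\theta + \mm^2 u_0^2} = \sqrt{\theta}(v+1)/(2\sqrt{v})$ and, in particular, the combination $\dd\sqrt{\theta + \mm^2 u_0^2} - \dd\mm u_0$ collapses to $\dd\sqrt{\theta}/\sqrt{v}$. Writing $f(u) := \dd \sqrt{\theta + \mm^2 u^2} - \dd\mm u + (\mm+\nn)u$ so that $R^{[\theta,\mm,\nn]}_\dd(u) = f(u)^2 - (\mm-\nn)^2 u^2$, I would then factor this as the difference of squares $(f(u_0) + (\mm-\nn)u_0)(f(u_0) - (\mm-\nn)u_0)$. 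Pulling out the common prefactor $\sqrt{\theta}/(2\mm\sqrt{v})$, the two factors telescope (the $(\mm+\nn)\pm(\mm-\nn)$ terms decouple $\mm$ from $\nn$) to $2\mm(\dd + v - 1)$ and $2(\dd\mm + \nn v - \nn)$ respectively, yielding
\[
R^{[\theta,\mm,\nn]}_\dd(u_0) = \frac{\theta}{\mm v}\,(\dd + v - 1)(\dd\mm + \nn v - \nn).
\]

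The final step is to expand the product on the right and use the defining relation $\nn v^2 = (\dd-1)(\dd\mm - \nn) = \dd^2\mm - \dd\nn - \dd\mm + \nn$ to absorb the constant terms: the miracle is that these constants match $\nn v^2$ exactly, so after collecting the product equals $v\bigl[\dd(\nn + \mm) - 2\nn + 2\nn v\bigr]$, and the outer $1/v$ cancels to give
\[
R^{[\theta,\mm,\nn]}_\dd(u_0) = \frac{\theta}{\mm}\bigl[\dd(\nn + \mm) - 2\nn + 2\nn v\bigr].
\]
Expanding the target $(\sqrt{\theta\nn/\mm})^2(\sqrt{\dd-1} + \sqrt{\dd\mm/\nn-1})^2 = (\theta/\mm)[\nn(\dd-1) + (\dd\mm - \nn) + 2\nn v]$ yields the same expression, which completes the proof. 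The main obstacle here is not conceptual but purely bookkeeping: the argument is driven by the two algebraic miracles $4v + (v-1)^2 = (v+1)^2$ and $\nn v^2 = \dd^2\mm - \dd\nn - \dd\mm + \nn$, and organizing the computation around these two identities is what keeps sign errors under control.
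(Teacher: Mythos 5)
Your algebra is correct and is precisely the computation the paper omits (its ``proof'' of this corollary is the single sentence that plugging $u_0$ into Proposition~\ref{prop:bound} gives the result). The two identities you isolate, $4v+(v-1)^2=(v+1)^2$ and $\nn v^2=(\dd-1)(\dd\mm-\nn)$, the collapse of $\dd\sqrt{\theta+\mm^2u_0^2}-\dd\mm u_0$ to $\dd\sqrt{\theta}/\sqrt{v}$, and the difference-of-squares factorization all check out, and the end result $R^{[\theta,\mm,\nn]}_\dd(u_0)=\tfrac{\theta}{\mm}\bigl[\dd(\mm+\nn)-2\nn+2\nn v\bigr]$ is indeed the square of the corollary's right-hand side.

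The gap is in the final inference. You prove $R^{[\theta,\mm,\nn]}_\dd(u_0)=(\mathrm{RHS})^2$ and declare the proof complete, but the corollary asserts $\lambda_1(q_\dd)\le \mathrm{RHS}$, not $\lambda_1(q_\dd)\le(\mathrm{RHS})^2$; since $\mathrm{RHS}\ge\sqrt{\theta(\dd-\nn/\mm)}\ge\sqrt{2}>1$ under the stated hypotheses, the squared bound is strictly weaker and does not imply the unsquared one. The resolution is that Proposition~\ref{prop:bound} as printed is itself off by a square relative to its own proof: that proof shows $\lambda_1(q_\dd)\le t_\dd$ together with $t_\dd^2\le R(u)$, i.e.\ $\lambda_1(q_\dd)\le\sqrt{R(u)}$ (the polynomial $q_\dd$ carries an outer $\mathbb{S}$, so its roots live on the singular-value scale while $R(u)$ lives on the eigenvalue scale). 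With that corrected reading, your identity $R(u_0)=(\mathrm{RHS})^2$ yields exactly the corollary. You need to say this explicitly; as written, the conclusion does not follow from the proposition you cite. A second, minor point: your parenthetical treatment of $\dd=1$ does not work --- taking $u_0=0$ gives $R(0)=\theta$, which is not the square of the stated bound (and for $\mm=\nn$, $\dd=1$ the corollary is actually false) --- so the statement should be read with $\dd\ge2$, consistent with your observation that $v\ge1$ and with the paper's use of the bound only for $\dd\ge2$.
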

At first glance, Corollary~\ref{cor:OK} seems to be precisely the bound we want 
(recalling that we wish to set $m = k n$).
However, one will notice that our definition of $q_d^{[\theta, m, n]}$ in 
\eqref{eq:qpoly} is not exactly the quantity in \eqref{eq:goal} that we want to 
find (it made the math a bit simpler to write).
For the quantity we are actually interested in, Corollary~\ref{cor:OK} implies 
the bound
\[
 \lambda_1\left(q_\dd^{[\kk,\mm-1,\nn-1]}\right)  \leq 
 \sqrt{\frac{\kk(\nn-1)}{\mm-1}} \left(\sqrt{\dd-1} + 
 \sqrt{\frac{\mm-1}{\nn-1}\dd - 1}\right).
 \]
Fortunately for us (and curiously), the bound we get because of this shift is 
actually {\em better} than Ramanujan. 
\begin{lemma}\label{lem:better}
For all $k > 0$, $\dd\geq 2$, $n \geq 1$ and $m = k n$, we have
\begin{align*}
\sqrt{\frac{\kk(\nn-1)}{\mm-1}} \left(\sqrt{\dd-1} + 
\sqrt{\frac{\mm-1}{\nn-1}\dd - 1}\right) 
&=  \sqrt{(\dd-1)}\sqrt{\left(1 - \frac{\mm-\nn}{\nn(\mm-1)}\right)} + 
\sqrt{\dd \kk - 
1 + \frac{\mm-\nn}{\nn(\mm-1)}}\\
&\leq \left(\sqrt{\dd-1} + \sqrt{\dd\kk - 1}\right)
\end{align*}
\end{lemma}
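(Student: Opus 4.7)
The plan is to handle the two lines of the display separately: first verify the algebraic identity on line one, then prove the inequality on line two by rationalizing each difference of square roots.

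For the identity, I substitute $\mm = \kk\nn$ into the factor $\frac{\kk(\nn-1)}{\mm-1}$ to get
\[
\frac{\kk(\nn-1)}{\mm-1} = \frac{\mm-\kk}{\mm-1} = 1 - \frac{\kk-1}{\mm-1} = 1 - \frac{\mm-\nn}{\nn(\mm-1)},
\]
which immediately handles the first summand. For the second summand, distributing and reusing the same simplification gives
\[
\frac{\kk(\nn-1)}{\mm-1}\!\left(\frac{\mm-1}{\nn-1}\dd - 1\right) = \dd\kk - \frac{\kk(\nn-1)}{\mm-1} = \dd\kk - 1 + \frac{\mm-\nn}{\nn(\mm-1)},
\]
which yields the claimed identity on the first line.

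For the inequality, I set $\epsilon := \frac{\mm-\nn}{\nn(\mm-1)}$, noting $\epsilon \in [0,1]$ since $\mm \geq \nn$ (because $\kk \geq 1$). The task reduces to showing
\[
\sqrt{(\dd-1)(1-\epsilon)} + \sqrt{(\dd\kk-1)+\epsilon} \;\leq\; \sqrt{\dd-1} + \sqrt{\dd\kk-1}.
\]
Rationalizing the two square-root differences (i.e.\ multiplying each by the appropriate conjugate) rewrites the deficit as
\[
\text{LHS} - \text{RHS} \;=\; \epsilon\left[\frac{1}{\sqrt{\dd\kk-1+\epsilon}+\sqrt{\dd\kk-1}} - \frac{\sqrt{\dd-1}}{1+\sqrt{1-\epsilon}}\right],
\]
so it is enough to prove the bracket is nonpositive, i.e.\
\[
\sqrt{\dd-1}\bigl(\sqrt{\dd\kk-1+\epsilon}+\sqrt{\dd\kk-1}\bigr) \;\geq\; 1 + \sqrt{1-\epsilon}.
\]

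To finish, I would bound the two sides using the hypotheses: since $\kk \geq 1$, we have $\sqrt{\dd\kk-1+\epsilon}+\sqrt{\dd\kk-1} \geq 2\sqrt{\dd-1}$, so the left-hand side is at least $2(\dd-1)$, and this is $\geq 2$ by $\dd \geq 2$; the right-hand side is at most $2$. The main ``obstacle'' here is really just bookkeeping: the rationalization must be done carefully so that the factor of $\epsilon$ can be pulled out cleanly and the resulting term-by-term comparison lines up, and one has to check that the hypotheses $\dd \geq 2$ and $\kk \geq 1$ are each used exactly once to close the final estimate.
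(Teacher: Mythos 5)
Your proof is correct, and it reaches the conclusion by a genuinely different (if closely parallel) route from the paper's. The paper does not bother to verify the first line's identity at all (you do, correctly, and it is the same computation one would write down), and for the inequality it introduces $f(x) = a\sqrt{1-x}+\sqrt{b+x}$, shows $f'(x)\le 0$ on $[0,1]$ when $a,b\ge 1$, and concludes $f(\epsilon)\le f(0)$ with $a=\sqrt{\dd-1}$, $b=\dd\kk-1$, $x=\frac{\mm-\nn}{\nn(\mm-1)}$. You instead take the finite-difference analogue: rationalize $\sqrt{b+\epsilon}-\sqrt{b}$ and $a-a\sqrt{1-\epsilon}$ by conjugates, factor out $\epsilon$, and compare the two quotients. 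The key comparisons are essentially the same inequality in disguise (the derivative condition $\sqrt{1-x}\le a\sqrt{b+x}$ versus your cross-multiplied $1+\sqrt{1-\epsilon}\le \sqrt{\dd-1}\,(\sqrt{\dd\kk-1+\epsilon}+\sqrt{\dd\kk-1})$, the latter being the ``integrated'' form of the former), and your crude closing bounds $2(\dd-1)\ge 2\ge 1+\sqrt{1-\epsilon}$ check out. What your version buys is the avoidance of calculus and an explicit verification of the displayed identity; what the paper's version buys is brevity. One shared cosmetic caveat: both arguments really use $\kk\ge 1$ (so that $\epsilon\ge 0$ and $\dd\kk-1\ge \dd-1$), which is what the hypothesis $\mm=\kk\nn\ge\nn$ amounts to, even though the lemma is stated with the weaker ``$\kk>0$''; you state this assumption explicitly, which is the right thing to do.
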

\begin{proof}
Consider the function 
\[
f(x) = a\sqrt{1-x} + \sqrt{b+x}
\]
for positive numbers $a, b$.
Then 
\[
f'(x) = \frac{\sqrt{1-x} - a\sqrt{b+x}}{2\sqrt{1-x}\sqrt{b+x}}
\]
and so $f'(x) \leq 0$ for $x \in [0, 1]$ and $a, b \geq 1$.
Hence
\[
a \leftarrow \sqrt{\dd-1} \geq 1
\AND
b \leftarrow \dd\kk-1 \geq 1
\AND
x \leftarrow \frac{m-n}{n(m-1)} < 1
\]
implies the inequality we want (which in particular is true for $\dd \geq 2$).
\end{proof}
 
\section{Polynomial time algorithm} \label{poly}

\newcommand{\Cl}{C_\ell}
\newcommand{\El}{E_\ell}
\newcommand{\hCl}{\hat{C}_\ell}
\newcommand{\ess}{t}
\newcommand{\hA}{\hat{A}}
\newcommand{\hP}{\hat{P}}
\newcommand{\hS}{\hat{S}}
\newcommand{\tA}{\tilde{A}}

\newcommand{\node}{w}

\subsection{Description of the algorithm}\label{sec:alg}

The algorithm follows the proof of existence, starting with the expected 
characteristic polynomial and using an interlacing family to find a particular 
polynomial with 
see \cite{MCohen} and \cite{MSS1} for more details). 
We will consider a tree with many nodes, where each node $\node$ corresponds to 
a 
graph $G_\node$.
The roots node corresponds to the empty graph on $kn + n$ vertices, and the 
graphs corresponding to child nodes will consist of the graph corresponding to 
the parent node with a single edge added.
This will be  done in a particular way so that after every $k$ edges we have 
added another $k$-claw and after every $nk$ edges, we have added another 
$k$-claw matching.
Intuitively, one can think of the graphs $G_\node$ as ``partial assignment'' 
graphs;  that is, we construct the Ramanujan graph one edge at a time and 
$G_\node$ 
is the graph consisting of edges we have picked so far.

To each node $n$ we also associate a polynomial $p_\node$, however these will 
be 
built starting at the leaf nodes and working up the tree.
The polynomials $p_\node$ associated to a leaf node $\node$ will simply be the 
characteristic polynomial of the adjacency 
matrix of $G_\node$ (its associated graph).
All other nodes will receive the polynomial formed by averaging over all of the 
leaf node polynomials below it in the tree.
Given the interpretation of $G_\node$ as a partial assignment graph, the 
resulting 
polynomials can be thought of as the expected characteristic polynomial of the 
final graph conditioned on the fact that it must contain $G_\node$ as a 
subgraph.
We will perform a greedy search of the graph, moving $k$ levels at a time (so 
corresponding to adding a new $k$-claw matching each turn), starting with the 
root and descending all the way to one of the leaves (which will be the 
resulting Ramanujan graph).

Of particular note is that the root (which, as we noted above, corresponds to 
the empty graph) will have the expected characteristic polynomial (taken over 
all of the leaves) associated to it, which (by Theorem~\ref{thm:main}) has its 
second largest root smaller than the Ramanujan bound.
Furthermore, this polynomial can be written in the form
\[
\expectt
{P_1, \dots,P_{\dd} \in \perms_{\nn\kk}}
{S_1,\dots,S_{\dd} \in \perms_{\nn}} 
{
\charpoly{
    \sum_{i=1}^{\dd} 
    (P_i \oplus S_i)  
    \offdiag{I_{\nn}^{[\kk]}}
    (P_i \oplus S_i )^T
    }
}.
\]
Note that each pair of permutations $(P_i, S_i)$ corresponds to a $k$-claw 
matching, with the sum being the union of the graphs. 
The leaves of our tree will include all possible biregular bipartite graphs 
that can be formed this way (with repetitions) and their associated polynomials 
will be
\[
\charpoly{
    \sum_{i=1}^{\dd}   
    (P_i \oplus S_i) 
    \offdiag{I_{\nn}^{[\kk]}}
    (P_i \oplus S_i )^T
}
\]
for each possible sequence of permutation matrices $P_1, 
\dots,P_{\dd} \in \perms_{\nn \kk}$ and $S_1,\dots S_{\dd} \in \perms_\nn$.

By construction, every node will differ by exactly one edge (the same edge) 
from each of its siblings.
The utility of this is that the characteristic polynomials of the adjacency 
matrices of such graphs will have a common interlacer.
This makes the tree itself an {\em interlacing family} (as first defined in 
\cite{MSS1}) and therefore gives it the property that every parent node has at 
least one child whose associated polynomial has a smaller $k$th root than the 
one associated to the parent.
Hence any bound on the second eigenvalue of a node can be propagated down the 
tree until one hits a leaf.
By Theorem~\ref{thm:main} and Lemma~\ref{lem:better}, we know that the root 
satisfies the Ramanujan bound, and so there exists a path down the tree which 
maintains this bound.
Every node will have at most $kn$ children, and so the number of nodes that are 
$k$ levels lower will be at most $(kn)^k$ (which, for fixed $k$, is polynomial 
in $n$).
Hence we can examine all of them and find the one whose associated polynomial 
has the smallest second eigenvalue, if we can compute the associated 
polynomials efficiently.
This is precisely what was shown in \cite{MCohen} for the special case $k = 1$.

The first observation one can make when trying to construct the associated 
polynomials is that the polynomials associated to nodes that are on levels 
which are multiples of $kn$ are easy to compute.
This is because the corresponding graph $G_n$ contains a union of complete 
$k$-claw matchings, and so the expectation is over the remaining $k$-claw 
matchings that need to be placed.
In particular, this causes the polynomial to be an expectation over entire 
permutation groups and this provides enough symmetry to allow us to compute the 
associated polynomial for this node from the characteristic polynomial of its 
associated graph using polynomial convolutions (see, for example, \cite{HPS}).
Hence we can restrict our attention to graphs where $r-1$ $\kk$-claw matchings 
have been chosen and the $r$th $\kk$-claw matching has been partial assigned.
Since we are moving down $k$ rows at a time, we will always be in a situation 
where $\ess$ of the $\kk$-claws have been chosen and therefore $\ell$ more 
$\kk$-claws remain to finish the $r$th $\kk$-claw matching (so $\ess + \ell = 
\nn$).
If we let
\[
\offdiag{A}
\]
be the adjacency matrix of the graph at this node (where $A$ is a $\kk \nn 
\times \nn$ rectangular matrix), we can then represent the 
$\ell$ remaining $\kk$-claws by the block matrix 
\[
\Cl= 
\begin{pmatrix}
I_\ell^{[k]} & 0_{\kk \ell \times \ess} \\
0_{ \kk \ess \times \ell}&0_{\kk \ess \times \ess}
\end{pmatrix}
\]
(where $I_\ell^{[k]}$ consists of $\kk$ vertical copies of the identity matrix 
of size 
$l$).
Hence the entire algorithm comes down to our ability to compute (in polynomial 
time) the following quantity:
\begin{equation}\label{eq:alg_goal}
\expectt
{P \in \mathcal{P}_{\kk \ell}}
{S \in \mathcal{P}_{l}}
{
    \charpoly{
        \offdiag{A}
        +
        \diag{P \oplus I_{\kk\ess}}{S \oplus I_{\ess}}
        \offdiag{\Cl}
        \diag{P \oplus I_{\kk\ess}}{S \oplus I_{\ess}}^T
    }
}
\end{equation}

\subsection{Elimination of the second permutation}

The first issue that one finds when trying to compute \eqref{eq:alg_goal} is 
the appearance of two permutation matrices $P$ and $S$.
However we will see that computing all possible permutations of 
$I_\ell^{[\kk]}$ in this manner is redundant --- in fact, we will be able to 
replace the average in \eqref{eq:alg_goal} (which contains both $P$ and $S$) by 
an average over a single rectangular matrix in a sufficiently regular group. 
To add complication to the issue, all of this must be done after we have 
projected out the all $1$ vector (so that we can use Theorem~\ref{thm:quad}).

\begin{rmk}
The primary reason for this redundancy (at least in linear algebraic terms) is 
the fact that $\Cl$ behaves like a projection matrix.  
The clearest way to see this (and the approach we will take), 
is to ``diagonalize'' $\Cl^-$ using the singular value decomposition. 
The case when $\kk=1$, however, is much less cumbersome because in that case 
$\Cl$ is already in a diagonal form.
As such, the reader will note that much of this subsection was reduced to a 
passing remark in \cite{MCohen}.
\end{rmk}

To begin, let $\El$ be the block matrix
\[
\El= \frac{1}{\ell}
\begin{pmatrix}
J_{\kk \ell \times \ell} &0_{\kk \ell \times \ess} \\
 0_{k \ess  \times \ell}&0_{\kk \ess \times \ess}
\end{pmatrix}
\]
where $J_{\kk \ell \times \ell}= \one_{\kk \ell} \one_\ell^{T}$ is the $\kk 
\ell 
\times \ell$ rectangular matrix with only ones. 
Furthermore, let
\[
A^+ = A + \El
\AND
\Cl^- = \Cl - \El.
\] 
Since $J$ is invariant with respect to permutations, it suffices to consider:
\[
\expectt
{P \in \mathcal{P}_{\kk \ell}}
{S \in \mathcal{P}_{l}} 
{
    \charpoly{
        \offdiag{A^+}
        +
        \diag{P \oplus I}{S \oplus I}
        \offdiag{\Cl^-}
        \diag{P \oplus I}{S \oplus I}^T
    }
}
\] 
We start by changing the basis to isolate the vector $\one$ which is a singular 
vector (on both sides) for all of these matrices.
Let
\[
\Cl^-
= 
(U \oplus I_{\kk \ess})^T \hCl (V \oplus I_{\ess}). 
\]
be the singular decomposition of $\Cl^-$ for which the $\ell$th singular vector 
is the one corresponding to $\one$.
In particular,
\[
\hCl 
= \begin{pmatrix}
\sqrt{\kk} I_{\ell-1} \oplus 0 & 0_{\ell \times \ess} \\
0_{(\kk \nn - \ell) \times \ell} & 0_{\kk \nn - \ell \times \ess}
\end{pmatrix}
\]
and $U \in \orth_{\kk\ell}$ and $V \in \orth_{\ell}$.
Hence we have
\begin{itemize}
\item $(\hP \oplus 1) \oplus I_{\kk\ess} = (U \oplus I_{\kk \ess}) (P \oplus 
I_{\kk \ess}) (U \oplus I_{\kk \ess})^T$, and
\item $(\hS \oplus 1) \oplus I_{\ess} = (V \oplus I_{\ess}) (S \oplus 
I_{\kk \ess}) (V \oplus I_{\ess})^T$.
\end{itemize}
where $\hP \in \reps_{\kk\ell-1}$ and $\hS \in \reps_{\ell-1}$.
Finally, if we set
\[
\hA = 
(U \oplus I_{\kk \ess}) A^+ (V \oplus I_{\ess})^T
\]
then by the invariance of the determinant with respect to change of basis we 
have that \eqref{eq:alg_goal} is the same as
\begin{equation}\label{eq:alg_goal2}
\expectt
{\hP \in \reps_{\kk \ell -1}}
{\hS \in \reps_{\ell-1}}
{
    \charpoly{
        \offdiag{\hA} + 
        \diag{\hP \oplus I_{\kk\ess+1}}{\hS \oplus I_{\ess+1}}
        \offdiag{\hCl}
        \diag{\hP \oplus I_{\kk\ess+1}}{\hS \oplus I_{\ess+1}}^T
    }
}.
\end{equation}

The main results of \cite{new_poly} or \cite{HPS} imply the 
following extension of Corollary 4.12 from \cite{MSS4}: 
\begin{lemma}\label{lem:quad}
If $C$ and $D$ are $(\mm+\nn)\times(\mm+\nn)$ symmetric matrices and $r <\mm$  
then
\[
\expect
{\hP \in \reps_{r}}{
    \charpoly{
        C+ 
        (\hP \oplus I)
        D
        (\hP \oplus I)^T
}}
= 
\expect
{Q \in \orth_{r}}{
    \charpoly{
        C+ 
        (Q \oplus I)
        D
        (Q \oplus I)^T
}}.
\]
where each $I = I_{m+n-r}$.
\end{lemma}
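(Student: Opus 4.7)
The plan is to invoke the quadrature theorems from \cite{new_poly} and \cite{HPS}, which assert that for any expression of the form $\expect{M}{\charpoly{C + (M \oplus I_{m+n-r}) D (M \oplus I_{m+n-r})^T}}$, the value depends on the distribution of $M$ only through a finite list of \emph{minor-orthogonal} moments of $M$. Since both the Haar measure on $\orth_r$ and the uniform measure on $\reps_r$ are listed in Remark~\ref{rmk:minor_orth} as minor-orthogonal, the two expectations must coincide.

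In more detail, I would first expand $\charpoly{C + (M \oplus I) D (M \oplus I)^T}$ as a polynomial in $x$, whose coefficients are signed sums of principal minors of the symmetric matrix inside. Applying Cauchy--Binet to each such minor yields a multilinear expression in the entries of $M \oplus I$, and because $M$ appears once straight and once transposed, each resulting monomial groups naturally into a product of a pair of minors $[M \oplus I]_{S,T}\,[M \oplus I]_{S',T'}$. Since the lower-right block is the deterministic identity, each such minor of $M \oplus I$ factors (up to sign) as a minor of the random block $M$ times a minor of $I_{m+n-r}$; the hypothesis $r < m$ ensures enough slack in the identity block for this factorization to be unambiguous.

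Having reduced the computation to a linear combination, with deterministic coefficients depending on $C$ and $D$, of expected minor-pair products $\expect{M}{[M]_{S,T}\,[M]_{S',T'}}$ (with $S, T, S', T' \subseteq [r]$ of matching cardinalities), I would then apply the definition of minor-orthogonality (Remark~\ref{rmk:minor_orth}): for every such pair of minors, the expectation under the uniform measure on $\reps_r$ agrees with the expectation under the Haar measure on $\orth_r$. Summing term by term yields the claimed equality.

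The main obstacle is the bookkeeping needed to justify the factorization of minors across the identity block and to verify that only the ``right'' moments --- exactly those controlled by minor-orthogonality --- appear in the expansion. Fortunately, the theorems of \cite{new_poly} and \cite{HPS} are stated in essentially this generality (with the deterministic shift $C$ passing through the expansion harmlessly), so the substantive work is really just a notational translation between our setup and theirs.
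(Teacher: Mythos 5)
Your proposal takes essentially the same route as the paper: the paper gives no proof of this lemma at all beyond the sentence introducing it, which simply asserts that it follows from the main results of \cite{new_poly} or \cite{HPS} as an extension of Corollary 4.12 of \cite{MSS4}, exactly the quadrature/minor-orthogonality reduction you invoke. Your added sketch (expand the characteristic polynomial into minors, factor across the identity block, and match the paired-minor moments of $\reps_r$ and $\orth_r$) is a fair summary of what those cited theorems establish, so the proposal is correct and consistent with the paper.
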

Decomposing the matrix 
\[
\diag{\hP \oplus I_{\kk\ess+1}}{\hS \oplus I_{\ess+1}}
= 
\diag{\hP \oplus I_{\kk\ess+1}}{I_{\nn}}
\diag{I_{\kk\nn}}{\hS \oplus I_{\ess+1}}
\]
we can apply Lemma~\ref{lem:quad} twice to \eqref{eq:alg_goal2} to get
\begin{align}
&\expectt
{\hP \in \reps_{\kk \ell -1}}
{\hS \in \reps_{\ell-1}}
{
    \charpoly{
        \offdiag{\hA} + 
        \diag{\hP \oplus I_{\kk\ess+1}}{\hS \oplus I_{\ess+1}}
        \offdiag{\hCl}
        \diag{\hP \oplus I_{\kk\ess+1}}{\hS \oplus I_{\ess+1}}^T
    }
}
\notag\\=& \notag
\expectt
{Q \in \orth_{\kk \ell -1}}
{R \in \orth_{\ell-1}}
{
    \charpoly{
        \offdiag{\hA} + 
        \diag{Q \oplus I_{\kk\ess+1}}{R \oplus I_{\ess+1}}
        \offdiag{\hCl}
        \diag{Q \oplus I_{\kk\ess+1}}{R \oplus I_{\ess+1}}^T
    }
}
\notag\\=&
\expectt
{Q \in \orth_{\kk \ell-1}}
{R \in \orth_{\ell-1}}
{
    \charpoly{
        \offdiag{(\hA+(Q\oplus I)\hCl(R\oplus I)^{T})}
    }
}
\label{eq:alg_goal3}
\end{align}
Now, notice that
\[
(Q\oplus I)\hCl= \sqrt{\kk} 
\twobytwo
{Q_{(\ell-1)}} { 0_{(\ell-1) \times (\ess+1)}}
{0_{(\kk \ess +1) \times (\ell-1)}} {0_{(\kk\ess+1) \times (\ess+1) }}.
\]
where $Q_{(\ell-1)} $ corresponds to the rectangular matrix of size $(\kk 
\ell-1)\times(\ell-1)$ made up by keeping only the first $\ell-1$ columns of 
$Q$. 

\begin{rmk}\label{rmk:stiefel}
The matrix $Q_{(\ell-1)}$ is often referred to as an $(\ell-1)$-frame of $Q$.
More generally for $s>r$, 
the set of orthonormal $r$-frames of dimension $s$ consist of all rectangular 
$s \times r$ matrices such that the columns form an orthonormal family of 
vectors in $\mathbb{R}^s$.
Such families are well studied (see, for example, \cite{stiefel}).
In particular, it is well known that the set of 
$r$-frames of dimension $s$ forms a 
compact manifold called the {\em Stiefel manifold}, which we denote 
$\rframe{r}{s}$. 
These manifolds can be equipped with a Haar uniform measure that 
is invariant under left action by $\orth_s$ and right action by 
$\orth_r$.
Hence it follows that for any $R \in \orth_{\ell-1}$ we have
\begin{equation}\label{eq:same}
\expect
{\tilde{Q} \in V_{\ell-1}(\mathbb{R}^{\kk \ell-1})}
{f(\tilde{Q}R)}
= 
\expect
{\tilde{Q} \in V_{\ell-1}(\mathbb{R}^{\kk \ell-1})}
{f(\tilde{Q})}
\end{equation}
for any continuous function $f$, where the expectation is taken over the 
uniform/Haar measure (and similarly for multiplication on the left).
\end{rmk}
Using these properties, we can simplify \eqref{eq:alg_goal3} even further:
\begin{align}
&\expectt
{Q \in \orth_{\kk \ell-1}}
{R \in \orth_{\ell-1}} 
{ 
\charpoly{
    \offdiag{(\hA+(\sqrt{k}Q_{\ell-1}R^{T} \oplus 0))}
}}
\notag\\=&
\expect
{R \in \orth_{\ell-1}}{ 
\expect
{Q \in \orth_{\kk \ell-1}}
{ 
\charpoly{
    \offdiag{(\hA+(\sqrt{k}Q_{\ell-1}R^{T} \oplus 0))}
}}}
\notag\\=&
\expect
{R \in \orth_{\ell-1}}{ 
\expect
{\tilde{Q} \in V_{\ell-1}(\mathbb{R}^{\kk \ell-1})}
{ 
\charpoly{
    \offdiag{(\hA+(\sqrt{k}\tilde{Q} \oplus 0))}
}}}
\notag\\=&
\expect
{\tilde{Q} \in V_{\ell-1}(\mathbb{R}^{\kk \ell-1})}
{ 
\charpoly{
    \offdiag{(\hA+(\sqrt{k}\tilde{Q} \oplus 0))}
}}
\notag\\=&
\expect
{\tilde{Q} \in V_{\ell-1}(\mathbb{R}^{\kk \ell-1})}
{ 
\mathbb{S}
\charpoly{(\hA+(\sqrt{k}\tilde{Q} \oplus 0))(\hA+(\sqrt{k}\tilde{Q} 
\oplus 0))^T}.
}\label{eq:alg_goal_final}
\end{align}

\subsection{Characteristic polynomials in polynomial time}\label{sec:polytime}

The goal is to express \eqref{eq:alg_goal_final} in a computable way, which we 
do by computing an explicit formula for the coefficients in terms of $A$.
To this end, we will fix a matrix $A \in \mathbb{R}^{\mm\times \nn}$ with $m 
\geq n$ and we will let $Q$ be a matrix sampled uniformly from $\rframe{r}{s}$ 
(where $m \geq s \geq r$ and $n \geq r$)
and then consider the expansion of the polynomial 
\begin{equation}\label{eq:expect}
\expect
{Q}
{
\mydet{ xI 
        + 
        \left(A+\sqrt{k}\topleft{Q}\right) 
        \left(A+\sqrt{k}\topleft{Q}\right)^T
}
}
= \sum_{i=0}^{\nn}x^{\nn-i} c_i(A).
\end{equation}
We remind the reader that we use the notation 
\[
\minor{C}{S}{T} = \mydet{\{ C_{ij} \}_{i \in S, j \in T}}
\]
to denote the $(S, T)$-minor of $C$ and note that the main property of $Q$ that 
we will use is that it is {\em minor-orthogonal} as defined in 
\cite{new_poly}.  
That is, for any sets $S, T, U, V$ with $|S|=|T| = k$ and $|U|=|V|=\ell$, we 
have
\begin{equation}\label{minor-orth}
\expect{Q}{ \minor{Q}{S}{T}\minor{Q}{U}{V}} = \frac{1}{\binom{s}{k}} 
\test{S=U}\test{T=V}
\end{equation}
The proof of this fact is given in Lemma~\ref{frameLA} in the Appendix.

\newcommand{\sets}[5]{\substack{
#1 \in \binom{[\mm]}{#3}, #2 \in \binom{[\nn]}{#3} \\
|#1 \cap [s]|=#4, |#2 \cap [r]|=#5 
}}

\begin{prop}\label{prop:decompose}
For fixed $\mm, \nn, s, r$, the coefficients $c_i(A)$ from \eqref{eq:expect} 
have the explicit formula
\[
c_i(A) = 
\sum_{j, p, q} \frac{\binom{s-p}{i-j} 
\binom{r-q}{i-j}}{\binom{s}{i-j}}  k^{i-j}A_{p,q}^j
\quad
\text{where}
\quad
A_{p,q}^j= \sum_{
\substack{
X \in \binom{[\mm]}{j}, Y \in \binom{[\nn]}{j} \\
|X \cap [s]|=p, |Y \cap [r]|=q 
}} \minor{A}{X}{Y}^2
\]

%
%

\end{prop}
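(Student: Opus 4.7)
The plan is to start from the standard Cauchy--Binet identity
\[
\mydet{xI + MM^T} = \sum_{i=0}^{\nn} x^{\nn - i} \sum_{\substack{X \in \binom{[\mm]}{i}\\ Y \in \binom{[\nn]}{i}}} \minor{M}{X}{Y}^2
\]
applied to $M = A + \sqrt{\kk}\,\topleft{Q}$, and then to carefully expand each minor $\minor{M}{X}{Y}$ so that the randomness in $Q$ can be isolated and integrated out via the minor-orthogonality relation \eqref{minor-orth}. The coefficient of $x^{\nn-i}$ before taking expectation is the sum of squared minors $\minor{M}{X}{Y}^2$, so everything reduces to understanding a single such minor.

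First I would use multilinearity of the determinant in the rows of $\minor{M}{X}{Y}$. For each row index in $X \setminus [s]$ the corresponding row of $\topleft{Q}$ is zero, so the only row-splitting freedom comes from rows indexed by $X \cap [s]$. Choosing a subset $Z \subseteq X \cap [s]$ to be taken from $\sqrt{\kk}\,\topleft{Q}$ and the remaining rows from $A$ gives a block with $|Z|$ rows supported on columns $[r]$. A Laplace expansion along those rows yields
\[
\minor{M}{X}{Y} = \sum_{\substack{Z \subseteq X \cap [s]\\ Y' \subseteq Y \cap [r]\\ |Z|=|Y'|}} \varepsilon(Z,Y')\, \kk^{|Z|/2}\, \minor{Q}{Z}{Y'}\, \minor{A}{X\setminus Z}{Y\setminus Y'},
\]
for some signs $\varepsilon(Z,Y') = \pm 1$. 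Squaring and taking the expectation over $Q$, the minor-orthogonality identity \eqref{minor-orth} kills every cross-term except those with $Z_1 = Z_2$ and $Y_1' = Y_2'$, in which case $\varepsilon^2 = 1$ and $\expect{Q}{\minor{Q}{Z}{Y'}^2} = 1/\binom{s}{|Z|}$. Thus
\[
\expect{Q}{\minor{M}{X}{Y}^2} = \sum_{\substack{Z \subseteq X \cap [s]\\ Y' \subseteq Y \cap [r]\\ |Z|=|Y'|}} \frac{\kk^{|Z|}}{\binom{s}{|Z|}} \minor{A}{X\setminus Z}{Y\setminus Y'}^2.
\]

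Finally I would reindex the double sum defining $c_i(A)$ by the pair $(\tilde X, \tilde Y) := (X\setminus Z, Y \setminus Y')$ together with $(Z,Y')$, writing $j = i - |Z|$. The map $(X,Y,Z,Y') \mapsto (\tilde X, \tilde Y, Z, Y')$ is a bijection onto quadruples with $\tilde X \cap Z = \emptyset$, $\tilde Y \cap Y' = \emptyset$, $Z \subseteq [s]$, $Y' \subseteq [r]$, $|Z|=|Y'|=i-j$. Fixing $\tilde X \in \binom{[\mm]}{j}$ and $\tilde Y \in \binom{[\nn]}{j}$ with $|\tilde X \cap [s]| = p$ and $|\tilde Y \cap [r]| = q$, the number of admissible $Z$'s is $\binom{s-p}{i-j}$ and the number of admissible $Y'$'s is $\binom{r-q}{i-j}$. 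Grouping the $(\tilde X, \tilde Y)$ sums by $(p,q)$ into $A_{p,q}^j$ produces exactly the stated formula.

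The main obstacle I anticipate is purely bookkeeping: tracking the Laplace signs carefully enough to be confident that they disappear upon squaring, and then executing the reindexing without double-counting or missing the disjointness conditions on $(\tilde X, Z)$ and $(\tilde Y, Y')$ that convert the $Z, Y'$ sums into the clean binomial factors $\binom{s-p}{i-j}\binom{r-q}{i-j}$. No deep input beyond Cauchy--Binet, multilinearity, Laplace expansion, and the minor-orthogonality relation \eqref{minor-orth} is required.
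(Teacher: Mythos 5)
Your proposal is correct and follows essentially the same route as the paper's proof: Cauchy--Binet for the coefficients, an expansion of $\minor{A+\sqrt{\kk}\,\topleft{Q}}{X}{Y}$ over complementary minors (you derive it by multilinearity and Laplace expansion where the paper invokes its determinant-of-sums lemma, Lemma~\ref{sumdet}, but the identity and the vanishing outside $[s]\times[r]$ are the same), minor-orthogonality to eliminate cross terms, and the same reindexing by $(X\setminus Z,\,Y\setminus Y')$ yielding the counts $\binom{s-p}{i-j}\binom{r-q}{i-j}$. The signs indeed square away exactly as you anticipate, so no gap remains.
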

\begin{proof}

Using Corollary~\ref{coeff} and then Lemma~\ref{cb} we can start by writing
\[
c_i(A)=\sum_{S \in \binom{[\mm]}{i},T \in \binom{[\nn]}{i}}  
\minor{A +\sqrt{k} \topleft{Q}}{S}{T}^2.
\]
By Lemma~\ref{sumdet}, we can then expand
\begin{align*}
\minor{A + \topleft{\sqrt{k} Q} }{S}{T} 
&=\sum_{|U|=|V|} 
(-1)^{\|U+V\|_1} k^{|U|/2} 
\minor{ \topleft{Q} } {U}{V} \minor{A}{S \setminus U}{T \setminus 
V}\test{U \subseteq S}\test{V \subseteq T}
\\&=
\sum_{|U| = |V|} 
(-1)^{\|U+V\|_1} k^{|U|/2} 
\minor{ Q } {U}{V} \minor{A}{S \setminus U}{T \setminus V}\test{U \subseteq 
S \cap [s]}\test{V \subseteq T \cap [r]}.
\end{align*}
By minor orthogonality, we have 
\[
\expect{Q}{\minor{ Q } {U}{V}\minor{ Q } {U'}{V'}} = \frac{1}{\binom{s}{|U|} }
\test{U=U'}\test{V=V'}
\]
and so conditioning on $|U| = |V| = j$, we get
\[
\expect{Q}{
\minor{A + \topleft{\sqrt{k} Q}}{S}{T}^2
}
= 
\sum_j \frac{k^{j}}{\binom{s}{j}}
\sum_{|U|=|V|=j}
\minor{A}{S \setminus U}{T \setminus V}^2\test{U \subseteq S}\test{V \subseteq 
T}\test{U \subseteq [s]}\test{V \subseteq [r]}.
\]
Summing this over all $S$ and $T$ of size $i$, we then get
\begin{align*}
c_i(A)
&= 
\sum_{|S|=|T|=i} 
\sum_j \frac{k^{j}}{\binom{s}{j}}
\sum_{|U|=|V|=j}
\minor{A}{S \setminus U}{T \setminus V}^2\test{U \subseteq S}\test{V \subseteq 
T}\test{U \subseteq [s]}\test{V \subseteq [r]}.
\end{align*}
Now we simply need to change our reference sets: given that $U \subseteq S$ 
with 
$|U|=j$ and $|S|=i$, we must have $|S 
\setminus U| = i -j$ (and similarly with $T$ and $V$).
So if we substitute $X = S \setminus U$ and $Y = T \setminus V$, we can rewrite 
this sum as
\begin{align*}
c_i(A)
&=
\sum_j \frac{k^{j}}{\binom{s}{j}}
\sum_{|U|=|V|=j}
\sum_{|X|=|Y|=i-j} 
\minor{A}{X}{Y}^2\test{X \cap U = \varnothing}\test{Y \cap V = 
\varnothing}\test{U \subseteq [s]}\test{V \subseteq [r]}.
\\&= 
\sum_j \frac{k^{j}}{\binom{s}{j}}
\sum_{|X|=|Y|=i-j} 
\minor{A}{X}{Y}^2 \sum_{|U|=|V|=j}
\test{X \cap U = \varnothing}\test{Y \cap V = 
\varnothing}\test{U \subseteq [s]}\test{V \subseteq [r]}.
\end{align*}
Given a fixed $X$ with $|X \cap [s]| = p$, it is easy to see that 
\[
\sum_{|U|=j}\test{X \cap U = \varnothing}\test{U \subseteq [s]} = \binom{s-p}{j}
\]
and similarly for fixed $Y$ with $|Y \cap [r]| = q$.
Hence we have
\[
c_i(A)
=
\sum_{j, p, q} \frac{k^{j}}{\binom{s}{j}}
\binom{s-p}{j} \binom{r-q}{j}
\sum_{\substack{|X|=|Y|=i-j \\ |X \cap [s]| = p, |Y \cap [r]| = q}} 
\minor{A}{X}{Y}^2 
\]
and so the proposition follows from the change of variable $j \leftarrow i - j$.
\end{proof}
The key observation in \cite{MCohen} is that, while each term $A_{p, q}^j$ is 
itself a sum with an exponential number of terms, the total number of the 
$A_{p, q}^j$ is polynomial in $n$.
Hence if there was a way to compute these terms more efficiently (than the 
brute-force sum used in their definition), then the full polynomial could then 
be computed efficiently as well.
To accomplish this, \cite{MCohen} introduces a multivariate 
characteristic polynomial which will be used as a generating function for these 
terms.
\begin{defn}
For $m, n, s, r$ as in Proposition~\ref{prop:decompose} and a matrix $A \in 
\mathbb{R}^{\mm \times \nn}$, we define the matrix $\tilde{A}(y, z)$ whose 
entries are polynomials in $y$ and $z$:
\[
\tA(y,z)= 
\diag{y I_s}{I_{\mm - s}} A 
\diag{z I_r}{I_{\nn-r}}
\]
and let the polynomial
\begin{equation}\label{eq:def}
\theta_{A} (x,y,z)= \mydet{xI + \tA(y,z)^T \tA(y,z)}
\end{equation}
\end{defn}
The idea will be for $\theta_A(x, y, z)$ to act as a generating function, with 
the $y$ and $z$ variables keeping track of terms involving the upper left 
corner of $A$ while simultaneously allowing us to compute the polynomial as a 
simple determinant (which can be computed efficiently, despite being 
expressable as a sum with an exponential number of terms). 
More precisely we have the following result:
\begin{lemma}\label{lem:same}
\[
\theta_{A} (x,y,z)= \sum_{j=0}^\nn \sum_{p=0}^s  \sum_{q=0}^r x^{\nn-j} 
y^{2p}z^{2q} A_{p,q}^j
\]
where $A_{p,q}^j$ is the same as in Proposition~\ref{prop:decompose}.
\end{lemma}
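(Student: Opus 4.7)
The plan is to apply Cauchy--Binet to expand $\theta_A(x,y,z)$ as a sum over squared minors of $\tA(y,z)$, factor the variables $y$ and $z$ out of each such minor, and then group terms according to the sizes of the intersections $|X \cap [s]|$ and $|Y \cap [r]|$.

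First, I would record the general identity
\[
\mydet{xI_\nn + M^T M}
=
\sum_{j=0}^\nn x^{\nn-j}
\sum_{\substack{X \subseteq [\mm], Y \subseteq [\nn] \\ |X|=|Y|=j}}
\minor{M}{X}{Y}^2,
\]
valid for any $\mm \times \nn$ matrix $M$. This follows by expanding $\mydet{xI + M^T M}$ as $\sum_j x^{\nn-j}$ times the $j$th elementary symmetric function of the eigenvalues of $M^T M$, writing that symmetric function as the sum of principal $j \times j$ minors of $M^T M$, and applying Cauchy--Binet to each such principal minor.

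Next, I would apply this to $M = \tA(y,z)$. Since $\tA(y,z) = \diag{y I_s}{I_{\mm - s}} \, A \, \diag{z I_r}{I_{\nn - r}}$ is simply $A$ sandwiched between two diagonal matrices, multiplicativity of minors under multiplication by diagonal factors gives
\[
\minor{\tA(y, z)}{X}{Y}
= y^{|X \cap [s]|} \, z^{|Y \cap [r]|} \, \minor{A}{X}{Y}
\]
for every pair $X \in \binom{[\mm]}{j}, Y \in \binom{[\nn]}{j}$. Squaring and substituting into the Cauchy--Binet expansion, each such pair contributes $y^{2p} z^{2q} \minor{A}{X}{Y}^2$ with $p = |X \cap [s]|$ and $q = |Y \cap [r]|$.

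The last step is purely combinatorial: I would partition the inner sum over $X$ and $Y$ of size $j$ according to the values of $p$ and $q$. The collection of terms with fixed $(j, p, q)$ is exactly $y^{2p} z^{2q}$ times $A_{p,q}^j$ by the definition given in Proposition~\ref{prop:decompose}, and assembling these pieces yields the claimed formula. I do not foresee any real obstacle --- the only point to verify carefully is the multiplicativity of minors through a diagonal factor, which reduces immediately to $\mydet{D_1 C D_2} = \mydet{D_1}\mydet{C}\mydet{D_2}$ applied to the relevant submatrices.
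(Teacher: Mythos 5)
Your proposal is correct and follows essentially the same route as the paper's proof: expand $\mydet{xI + \tA^T\tA}$ via the principal-minor formula together with Cauchy--Binet, observe that each minor of $\tA(y,z)$ factors as $y^{|X\cap[s]|}z^{|Y\cap[r]|}\minor{A}{X}{Y}$, and regroup by the intersection sizes $(p,q)$. The only difference is cosmetic --- you justify the factorization via multiplicativity of minors through the diagonal factors, while the paper argues entrywise --- so there is nothing to add.
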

\begin{proof}
By Corollary~\ref{coeff}, we have
\begin{align*}
\mydet{xI + \tA(y,z)^T \tA(y,z)}
&= \sum_{i=0}^\nn x^{\nn-i} \left(  \sum_{V \in 
 \binom{[\nn]}{i},U\in \binom{[\mm]}{i}}  \minor{\tA(y,z)}{U}{V}^2\right)\\
 &= \sum_{i=0}^d x^{\nn-i}  \sum_{p=0}^s \sum_{q=0}^r \left(  
 \sum_{\substack{|U|=|V|=i \\ |U\cap[s]|=p, |V \cap[r]|=q}}  
 \minor{\tA(y,z)}{U}{V}^2\right).
\end{align*}
where the last line is simply conditioning on the sizes of $|U \cap [s]|$ and 
$|V \cap [t]|$ (and summing over all possibilities).
Since each element of $\tA$ contains a factor of $y$ if and only if it is in 
the first $s$ rows, any $U$ with $|U \cap [s]| = p$ will result in a factor of 
$y^p$.
Similarly each element of $\tA$ contains a factor of $z$ if and only if it is 
in the first $r$ columns, so any set $V$ with $|V \cap [t]| = q$ will result in 
a factor of $z^q$. 
Hence we have
\[
\theta_A(x, y, z) = 
\sum_{k=0}^d x^{\nn-i}  \sum_{p=0}^s \sum_{q=0}^r \left( 
 \sum_{\substack{|U|=|V|=i \\ |U\cap[s]|=p, |V \cap[r]|=q}}  
  y^{2p}z^{2q} \minor{A}{U}{V}^2\right)
\]
and so the result follows by changing the order of summations.
\end{proof}

Hence the $A^j_{p,q}$ terms can be computed in polynomial time using 
\ref{eq:def} as a generating function, thus giving a polynomial time algorithm 
for computing \eqref{eq:expect} as needed.

\begin{rmk}
Recall from Section~\ref{sec:alg} that in the special case $s=\kk\nn$ and 
$r=\nn$ (corresponding to a graph that is a union of $k$-claw matchings), we 
noted that it was possible to compute the expected characteristic polynomial 
efficiently from the (classic) characteristic polynomial of the graph.
In this case, it is easy to calculate $\tA(y,z)= yz A$ so \eqref{eq:def} 
reduces to
\[
\theta_A 
(x,y,z)= 
(yz)^{s}\charpoly{A^TA}(x/{yz})
\]
which is exactly the information we were using before.
Hence the ability to use $\theta_A(x, y, z)$ to compute 
\eqref{eq:alg_goal} seems to be the appropriate generalization of this 
observation.
\end{rmk}

\section{Appendix: Tools from linear algebra}

We will need two formulas that give an expansion of the determinant as a 
function of the minors of matrices.  
The first, which can be found in \cite{sums}, is for the sum of matrices:
\begin{lemma} \label{sumdet}
For all matrices $A, B \in \R^{n\times n}$, one has
\[
\mydet{A+B}= \sum_{\substack{S,T \subseteq [n] \\ |S| = |T|}} 
(-1)^{\|S+T\|_{1}} \minor{A}{S}{T} 
\minor{B}{[n]\setminus S}{[n] \setminus T}
\]
where $S + T$ denotes the symmetric difference (XOR) of $S$ and $T$, and we use 
the notation
$\|X\|_1 
= 
\sum_{x \in X} x$. 
\end{lemma}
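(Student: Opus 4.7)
The plan is to combine multilinearity of the determinant in its rows with Laplace's generalized cofactor expansion. As a first step, I would exploit the fact that $\det$ is a multilinear function of the rows of its argument. Since row $i$ of $A+B$ is the sum of row $i$ of $A$ and row $i$ of $B$, multilinearity gives
\[
\mydet{A+B} = \sum_{S \subseteq [n]} \mydet{M_S},
\]
where $M_S$ denotes the matrix whose $i$-th row equals the $i$-th row of $A$ when $i \in S$ and the $i$-th row of $B$ when $i \notin S$. This reduces the problem to evaluating each $\mydet{M_S}$ individually.

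Next I would apply Laplace's generalized cofactor expansion to $\mydet{M_S}$ along the block of rows indexed by $S$. This yields
\[
\mydet{M_S} = \sum_{\substack{T \subseteq [n] \\ |T| = |S|}} (-1)^{\|S\|_1 + \|T\|_1} \minor{M_S}{S}{T} \minor{M_S}{[n] \setminus S}{[n] \setminus T}.
\]
Because the rows of $M_S$ indexed by $S$ are exactly those of $A$ and the rows indexed by $[n] \setminus S$ are exactly those of $B$, these minors simplify to $\minor{A}{S}{T}$ and $\minor{B}{[n] \setminus S}{[n] \setminus T}$, respectively, independently of the other rows of $M_S$.

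To finish, I would reconcile the sign $(-1)^{\|S\|_1 + \|T\|_1}$ produced by Laplace with the stated form $(-1)^{\|S+T\|_1}$. This is immediate from the identity $\|S+T\|_1 = \|S\|_1 + \|T\|_1 - 2\|S \cap T\|_1$, so the two exponents agree modulo $2$.

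I do not anticipate a substantive obstacle: the identity is a textbook consequence of multilinearity and Laplace expansion. The one step that warrants care is the sign in the Laplace expansion, where one must track how a permutation of $[n]$ mapping $S$ bijectively to $T$ factors as a product of a permutation within $S$ and a permutation within $[n] \setminus S$, together with the transpositions needed to interleave the indices of $S$ and $T$ back into increasing order; that bookkeeping is precisely what produces the $\|S\|_1 + \|T\|_1$ exponent and is the only place where one could easily slip up by an overall sign.
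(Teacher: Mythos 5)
Your argument is correct. The paper does not prove Lemma~\ref{sumdet} at all --- it simply cites the reference \cite{sums} --- so there is no in-paper proof to compare against; your proof is the standard one and would serve as a self-contained justification. The two steps (row-multilinearity giving $\mydet{A+B}=\sum_{S}\mydet{M_S}$, then generalized Laplace expansion of $\mydet{M_S}$ along the rows indexed by $S$) are both sound, the identification of the resulting minors with $\minor{A}{S}{T}$ and $\minor{B}{[n]\setminus S}{[n]\setminus T}$ is immediate since each minor only sees rows from one of the two blocks, and the sign reconciliation via $\|S+T\|_1 \equiv \|S\|_1+\|T\|_1 \pmod 2$ is exactly right. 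The only cosmetic point is the convention that the minor indexed by the empty set equals $1$, which handles the boundary terms $S=\varnothing$ and $S=[n]$.
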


The second, which is a well-known result of Cauchy and (independently) Binet, 
regards the product of matrices (see \cite{horn}):
\begin{lemma} \label{proddet} \label{cb}
For all  matrices $A \in \R^{m \times n}$ and $B \in \R^{n \times p}$ and for 
all sets $|S| = |T| = k$ with $k \leq \min \{ m, n, p\}$, we have
\[
\minor{AB}{S}{T}
= \sum_{U \in \binom{[\nn]}{k}} \minor{A}{S}{U} \minor{B}{U}{T}
\]
\end{lemma}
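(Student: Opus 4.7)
The plan is to reduce the $(S,T)$-minor of $AB$ to the determinant of a product of a $k \times \nn$ matrix with an $\nn \times k$ matrix, and then to prove the resulting identity directly from Leibniz's formula. If we let $C$ denote the $k \times \nn$ submatrix of $A$ whose rows are indexed by $S$, and $D$ the $\nn \times k$ submatrix of $B$ whose columns are indexed by $T$, then $\minor{AB}{S}{T} = \mydet{CD}$ by definition, so it suffices to establish the special case $\mydet{CD} = \sum_{U \in \binom{[\nn]}{k}} \minor{C}{[k]}{U}\,\minor{D}{U}{[k]}$.

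To this end I would expand $\mydet{CD}$ via Leibniz and substitute $(CD)_{ij} = \sum_\ell C_{i,\ell} D_{\ell,j}$ into each of the $k$ factors. After distributing the product and swapping the order of summation, the identity becomes a sum over tuples $(\ell_1,\dots,\ell_k) \in [\nn]^k$, with an inner sum over $\sigma \in S_k$ that one recognizes as $\det(D')$, where $D'$ is the $k \times k$ matrix whose $i$th row equals the $\ell_i$th row of $D$.

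The key observation is that $\det(D')$ vanishes whenever two of the $\ell_i$ coincide, so only tuples of distinct indices contribute. I would then reparametrize each such tuple by the pair $(U,\tau)$ consisting of its underlying set $U = \{\ell_1,\dots,\ell_k\} \in \binom{[\nn]}{k}$ and the permutation $\tau \in S_k$ that sorts the tuple into increasing order; this yields $\det(D') = \mathrm{sgn}(\tau)\,\minor{D}{U}{[k]}$, and summing over $\tau$ for fixed $U$ regenerates $\minor{C}{[k]}{U}$ via a second application of Leibniz (read in reverse).

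The only place that requires real care is the sign bookkeeping in this reparametrization, but it is entirely routine, so there is no substantive obstacle. For a slicker alternative, one could invoke the functoriality of the $k$th exterior power: since $\Lambda^k(CD) = \Lambda^k(C) \circ \Lambda^k(D)$, the identity reduces to writing matrix entries of these operators in the standard basis of $\Lambda^k \R^{\nn}$, which is essentially the form in which the result appears in \cite{horn}.
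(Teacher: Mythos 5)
Your proposal is correct, but note that the paper does not actually prove this lemma at all: it is the classical Cauchy--Binet formula, and the authors simply cite it as a well-known result from Horn and Johnson \cite{horn}. Your argument is the standard textbook proof of that formula, and every step you outline goes through. The reduction to $\mydet{CD}$ for $C$ of size $k\times \nn$ and $D$ of size $\nn\times k$ is immediate since the $(S,T)$-submatrix of $AB$ is exactly the product of the row-restriction of $A$ and the column-restriction of $B$; the Leibniz expansion, the interchange of summation, the identification of the inner sum over $\sigma\in S_k$ as $\det(D')$, the vanishing of $\det(D')$ for repeated indices by alternation, and the reparametrization of distinct tuples as pairs $(U,\tau)$ with the sign $\mathrm{sgn}(\tau)$ absorbed into a second (reverse) Leibniz expansion producing $\minor{C}{[k]}{U}$ are all exactly as they should be. The only caveat is the one you already flag, namely the sign bookkeeping in passing from the tuple $(\ell_1,\dots,\ell_k)$ to the sorted set $U$, and that is indeed routine. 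Your alternative via functoriality of the $k$th exterior power, $\Lambda^k(CD)=\Lambda^k(C)\circ\Lambda^k(D)$, is also valid and is essentially the conceptual form under which the identity appears in the cited reference. In short: where the paper outsources this fact to the literature, you supply a complete and correct elementary proof; either would be acceptable, and nothing is missing.
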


We also need a well-known expansion of the characteristic polynomial in terms 
of the principal minors (see \cite{horn}, but this can also be derived directly 
from Lemma~\ref{sumdet} and Lemma~\ref{cb}):

\begin{cor} \label{coeff}
For a matrix $A \in \R^{n\times n}$, one has
\[
\mydet{xI+A}= \sum_{k=0}^{\nn} x^{\nn-k} \sum_{S \in 
\binom{[\nn]}{k}} 
\minor{A}{S}{S}
\]
\end{cor}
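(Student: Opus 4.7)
The plan is to derive this directly from Lemma~\ref{sumdet} by applying it to the matrices $xI$ and $A$. First I would write
\[
\mydet{xI+A}= \sum_{\substack{S,T \subseteq [\nn] \\ |S| = |T|}}
(-1)^{\|S+T\|_{1}} \minor{xI}{S}{T}
\minor{A}{[\nn]\setminus S}{[\nn] \setminus T}
\]
and then observe that the crucial simplification comes from the structure of $xI$: since it is diagonal, its $(S,T)$-minor $\minor{xI}{S}{T}$ vanishes whenever $S \neq T$, and when $S = T$ it is simply $x^{|S|}$ (the determinant of a $|S|\times|S|$ diagonal block with entries equal to $x$). This collapses the double sum to a single sum over subsets $S$.

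Next I would note that the sign $(-1)^{\|S+T\|_1}$ evaluates to $+1$ on the diagonal $S=T$, since the symmetric difference is empty. The remaining expression becomes
\[
\mydet{xI+A} = \sum_{S \subseteq [\nn]} x^{|S|} \minor{A}{[\nn]\setminus S}{[\nn]\setminus S},
\]
and then a substitution $S \leftarrow [\nn]\setminus S$ along with grouping by cardinality $k=|S|$ yields the stated identity.

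The calculation is entirely routine once Lemma~\ref{sumdet} is in place; there is no genuine obstacle. The only small point to be careful about is verifying that the minor of $xI$ really does vanish off-diagonal — this is immediate since picking any row $i \in S$ not in $T$ produces a zero row in the submatrix — and that the sign bookkeeping is trivial on the diagonal, which it is.
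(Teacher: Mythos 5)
Your proof is correct and follows exactly the route the paper itself indicates (the paper gives no proof, only a citation to \cite{horn} together with the remark that the identity can be derived directly from Lemma~\ref{sumdet}); your verification that the off-diagonal minors of $xI$ vanish and that the sign is trivial on the diagonal is complete and accurate. The only implicit convention is that the empty minor $\minor{A}{\varnothing}{\varnothing}$ equals $1$, which is standard and handles the $k=0$ term.
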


Finally, we use the fact (which we prove here) that random elements of the 
Stiefel manifold $\rframe{r}{s}$ 
under the invariant  (Haar/uniform) measure is {\em minor-orthogonal}. 
\begin{lemma} \label{frameLA}
Let $Q$ be a random matrix drawn uniformly from $\rframe{r}{s}$.
Then for any sets $S, T, U, V$ with $|S|=|T| = i$ and $|U|=|V|=\ell$, we 
have
\[
\expect{Q}{ \minor{Q}{S}{T}\minor{Q}{U}{V}} = \frac{1}{\binom{s}{i}} 
\test{S=U}\test{T=V}
\]
\end{lemma}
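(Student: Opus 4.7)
The plan is to exploit the two invariance properties of the Haar measure on $\rframe{r}{s}$ (left action by $\orth_s$, right action by $\orth_r$) together with the defining relation $Q^TQ = I_r$. I would first dispose of the cases where the expectation vanishes, and then compute the ``diagonal'' case $S=U, T=V$ using Cauchy--Binet and a symmetry argument.

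For the vanishing cases, suppose $S \neq U$. Then I can pick an index $a \in S \bigtriangleup U$ and consider the diagonal sign matrix $D_a \in \orth_s$ with $-1$ in position $(a,a)$ and $+1$ elsewhere. By left invariance, $D_a Q$ has the same distribution as $Q$; but since $a$ lies in exactly one of $S, U$, exactly one of the two minors $\minor{D_a Q}{S}{T}, \minor{D_a Q}{U}{V}$ picks up a sign $-1$ relative to $\minor{Q}{S}{T}, \minor{Q}{U}{V}$. Hence the expectation equals its own negative and is zero. The case $T \neq V$ is identical, using a diagonal sign matrix $D_b \in \orth_r$ acting on the right, $b \in T \bigtriangleup V$. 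This argument also covers the mismatched-size case $i \neq \ell$, since then at least one of the set equalities must fail.

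It remains to compute the expectation when $S = U$ and $T = V$ (so in particular $i = \ell$). For this I would apply the Cauchy--Binet identity (Lemma~\ref{cb}) to the deterministic relation $Q^T Q = I_r$ to obtain
\[
\sum_{S \in \binom{[s]}{i}} \minor{Q}{S}{T}\minor{Q}{S}{V} = \minor{Q^T Q}{T}{V} = \test{T=V},
\]
and then show that each summand depends only on $|S|$. For the latter, I would use left invariance under the subgroup of permutation matrices of $\orth_s$: for any $\pi \in S_s$, the matrix $P_\pi Q$ is distributed as $Q$, and $\minor{P_\pi Q}{S}{T}$ equals $\minor{Q}{\pi^{-1}(S)}{T}$ up to a sign depending only on $(\pi, S)$. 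Squaring kills this sign, giving $\expect{Q}{\minor{Q}{S}{T}^2} = \expect{Q}{\minor{Q}{\pi^{-1}(S)}{T}^2}$, which is independent of $S$ as $\pi$ ranges over $S_s$. Combined with the Cauchy--Binet identity and the fact that there are $\binom{s}{i}$ subsets $S$ of size $i$, this yields $\expect{Q}{\minor{Q}{S}{T}\minor{Q}{S}{V}} = \binom{s}{i}^{-1}\test{T=V}$, completing the proof. The main bookkeeping nuisance throughout is tracking the signs picked up when row/column index sets are permuted, but since in every relevant pair of minors these signs appear identically and cancel, no actual obstacle arises.
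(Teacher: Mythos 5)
Your proposal is correct and follows essentially the same route as the paper: the off-diagonal cases are killed by conjugating with a diagonal sign matrix and invoking left/right invariance of the Haar measure, and the diagonal case is computed by applying Cauchy--Binet to $Q^TQ=I_r$ together with permutation invariance to see that all $\binom{s}{i}$ terms have equal expectation. Your explicit handling of the symmetric difference and the mismatched-size case is a minor tidying of the same argument.
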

\begin{proof}
Define the matrix $E_t$ to be the diagonal matrix with diagonal entries $1$ 
except for $E_t(t, t)$ which is $-1$.
Now assume that $S \neq U$ and let $h \in S \setminus U$.
Then 
\[
 \minor{E_t Q}{S}{T} \minor{E_t Q}{U, V} = -\minor{Q}{S}{T}\minor{Q}{U}{V}
\]
and so 
\[
\expect{Q}{\minor{E_t Q}{S}{T} \minor{E_t Q}{U}{V}} = - \expect{Q}{ 
\minor{Q}{S}{T}\minor{Q}{U}{V}}
\]
however the function 
\[
f(X) = \minor{X}{S}{T} \minor{X}{U}{V}
\]
is continuous and so by the left invariance of $\rframe{r}{s}$ (see 
Remark~\ref{rmk:stiefel}), we have
\[
\expect{Q}{\minor{E_t Q}{S}{T} \minor{E_t Q}{U}{V}} = \expect{Q}{ 
\minor{Q}{S}{T}\minor{Q}{U}{V}}.
\]
Hence we must have 
\[
\expect{Q}{ \minor{Q}{S}{T}\minor{Q}{U}{V}} = 0
\]
and similarly when $T \neq V$.

In the case that $S = U$ and $T = V$, we must compute
\[
\expect{Q}{ \minor{Q}{S}{T}^2}.
\]
Again, by the invariance properties of $\rframe{r}{s}$ (this time under 
multiplication by permutation matrices on each side), this quantity is 
independent of the actual $S$ and $T$ and instead depends only on $|S|=|T| = j$.
On the other hand, by Lemma~\ref{cb}, we have for any $S$ with $|S| = j$
\[
1 = \minor{Q^T Q}{S}{S} = \sum_{U \in \binom{[s]}{j}} \minor{Q}{S}{U}^2
\]
since $Q^TQ = I_r$ (the $r \times r$ identity matrix).
Since there are $\binom{s}{j}$ terms in this sum (and all are equal in 
expectation), we conclude that
\[
\expect{Q}{ \minor{Q}{S}{T}^2 } = \frac{1}{\binom{s}{j}}.
\]
\end{proof}


\begin{thebibliography}{9}

\bibitem{Bal}
C. Ballantine, B. Feigon, R. Ganapathy, J. Kool, K. Maurischat, A. Wooding.
Explicit construction of Ramanujan bigraphs.
Women in Numbers Europe.
Springer (2015).


\bibitem{Bal2}
C. Ballantine, D. Ciubotaru. 
Ramanujan bigraphs associated with $SU(3)$ over a $p$-adic field. 
Proc. of the AMS. 139(6), 2011, 1939-1953.

\bibitem{Dum}
G. Brito, I. Dumitriu, K. D. Harris.
Spectral gap in random bipartite biregular graphs and applications.
\arxiv{1804.07808}, 2018.

\bibitem{fengli}
K. Feng, W.-C. W. Li. 
Spectra of hypergraphs and applications.
J. of Number Theory, 60(1):1–22, 1996.
 
\bibitem{GM1} 
A. Gribinski, A. W. Marcus.
A rectangular additive convolution for polynomials. 
\arxiv{1904.11552}, 2019.


\bibitem{HPS}
C. Hall, D. Puder, W. F. Sawin. 
Ramanujan Covering of Graphs.  
Proc. of 46th ACM STOC, 2016.

\bibitem{horn}
R. A. Horn, C. R. Johnson.
Matrix analysis. 
Cambridge Univ. Press, 2012.

\bibitem{MCohen} 
M. Cohen.
Ramanujan graphs in polynomial time.
Proc. of 57th IEEE FOCS, 276–281, 2016.
\arxiv{1604.03544}.


\bibitem{fried}
J. Friedman.
A proof of Alon’s second eigenvalue conjecture and related problems.
Memoirs of the AMS, Vol. 195 No. 910, 2008.

\bibitem{stiefel}
I. M. James.
The topology of Stiefel manifolds. 
Vol. 24. Cambridge Univ. Press, 1976.

\bibitem{LPS}
A. Lubotzky, R. Phillips, and P. Sarnak. 
Ramanujan graphs.
Combinatorica 8(3):261-277, 1989.

\bibitem{new_poly}
A. W. Marcus.
A class of multivariate convolutions (and applications).
\arxiv{2010.08996}, 2020.


 
\bibitem{MSS1}
A. W. Marcus, D. A. Spielman, N. Srivastava. 
Interlacing families I: bipartite Ramanujan graphs of all degrees
Ann. of Math. 182-1:307-325, 2015.


\bibitem{MSS4} 
A. W. Marcus, D. A. Spielman, N. Srivastava. 
Interlacing families IV: bipartite Ramanujan graphs of All sizes.
Proc. of 56th IEEE FOCS, 2015.
 

\bibitem{sums} 
M. Marcus. 
Determinants of sums.
The College Math. J., 21(2):130–135,1990



\bibitem{Mar}
G. A. Margulis. 
Explicit group theoretical constructions of combinatorial 
schemes and their application to the design of expanders and concentrators.
Problems of Information Transmission, 24(1):39--46, 1988.

\bibitem{Mor}
M. Morgenstern. 
Existence and Explicit Constructions of $q+1$ Regular Ramanujan Graphs for 
Every Prime Power $q$. 
Journal of Combinatorial Theory, Series B. 62: 44--62, 1994.
doi:10.1006/jctb.1994.1054
\end{thebibliography}
\end{document}